%%%%%%%%%% LATEX FILE: slice_semigroups.tex 

%%%%%%%%%% Riccardo Ghiloni, Vincenzo Recupero:
%%%%%%%%%% Analytic semigroups in noncommutative settings

%%%%%%%%%% DOCUMENT CLASS 

\documentclass[12pt,draft,reqno]{amsart}

%%%%%%%%%% PACKAGES LOADED

%\usepackage{showkeys} %% 
\usepackage{amssymb} %% needed for \smallsetminus
\usepackage{mathrsfs} %% needed for \mathfrak
\usepackage{stmaryrd} %% needed for \llbracket
\usepackage{color}

\DeclareMathAlphabet{\mathpzc}{OT1}{pzc}{m}{it} %% needed for newcommands \leb and \hausd (forse qui non serve)

%%%%%%%%%% PAGE LAYOUT 

\textheight=22.7cm %%21
\textwidth=16cm 

\oddsidemargin=0cm
\evensidemargin=0cm

\hoffset=0.1cm %% move to the right the text 
\voffset=-1.5cm %% move up the text

\linespread{1} 

%%%%%%%%%% THEOREM ENVIRONMENTS 

\newtheorem{Thm}{Theorem}[section]
\newtheorem{Cor}[Thm]{Corollary}%[section]
\newtheorem{Lem}[Thm]{Lemma}%[section]
%[section]
\newtheorem{Def}[Thm]{Definition}%[section]

\theoremstyle{definition}
\newtheorem{Rem}[Thm]{Remark}%[section]

\theoremstyle{definition}

\theoremstyle{definition}
\newtheorem{Example}[Thm]{Example}%[section]

\theoremstyle{definition} %%  roman style for 'Problem' enviroment

%%%%%%%%%% EQUATION NUMBERS 

\makeatletter
\@addtoreset{equation}{section}
\makeatother

%%%%%%%%%% AUTHORS' DEFINITIONS 
\newcommand{\x}{\mr{x}}

\newcommand{\eps}{\varepsilon}

\DeclareMathOperator{\sinc}{sinc}

\newcommand\alg{\mathbb{A}} %% algebra

\newcommand\funzione{\longrightarrow}% the arrow of a function 
\newcommand\ci{\mathbb{C}} %% set of complex numbers
\newcommand\erre{\mathbb{R}} %% set of real numbers  

\newcommand{\su}{\mathbb{S}}  %% S: unit sphere

\renewcommand\j{\mathbf{j}}  %% imaginary unit j
\renewcommand\k{\mathbf{k}} %% imaginary unit k

\newcommand{\quat}{\mathbb{H}}

\DeclareMathOperator{\re}{Re} %% real part
\DeclareMathOperator{\im}{Im} %% imaginary part

\newcommand\setmeno{\!\smallsetminus\!} % difference of sets

\providecommand{\clint}[1]{\hspace{0.045ex}[#1]} %% closed interval
\providecommand{\opint}[1]{\hspace{0.15ex}\left]#1\right[\hspace{0.15ex}} %% open interval

\newcommand\norma[2]{\Vert #1\Vert_{#2}} %%  #2-norm of #1

\newcommand{\A}{\mathsf{A}}  %% operator A
\newcommand{\B}{\mathsf{B}}  %% operator B
\newcommand{\Lap}{\mathsf{L}}  %% Laplace
\newcommand{\Id}{\ \!\mathsf{Id}}

\newcommand\End{\textsl{End}}
\renewcommand\r{\textsl{r}} % r for 'right linear' and for right kernel 
 % r for 'right linear' and for right kernel 

\newcommand\Lin{\mathscr{L}}

\newcommand\s{\mathpzc{S}} %% index for s for 'spherical'

\newcommand{\C}{\mathsf{C}} %% C for spherical resolvent operator
\newcommand{\Q}{\mathsf{Q}}

 % stem operator
%\newcommand{\so}[1]{\mathpzc#1} % stem operator

\newcommand{\T}{\mathsf{T}}  %% operator T
\renewcommand{\S}{\mathsf{S}}  %% operator S
  %% operator U
  %% operator F
  %% operator G
  %% operator H
  %% operator R
  %% operator D

%%%%%%%%%%%%%%%%%%%%

\providecommand{\opint}[1]{\hspace{0.15ex}\left]#1\right[\hspace{0.15ex}} %% open interval
\providecommand{\clsxint}[1]{\hspace{0.1ex}\left[#1\right[\hspace{0.15ex}} %% left closed interval
\DeclareMathOperator{\de}{d \! \hspace{0.2ex}} %% differential ''d'' in the integral

\renewcommand{\S}{\mathsf{S}}  %% operator S
  %% operator P
  %% operator V

%%%%%%

\renewcommand\sp{\hspace{2.8ex}} %%horizontal space

%%%%%%%%%%%%%%%

%\newcommand{\I}{\ \!\mathsf{I}}

%\newcommand{\B}{\mathsf{B}}

%\newcommand{\R}{\mathsf{R}}
%\newcommand{\Q}{\mathsf{Q}}

%\newcommand{\U}{\mathsf{U}}
%\newcommand{\K}{\mathsf{K}}

%%%%%

%\newcommand\Sf{\Sigma}
%\newcommand\Sf{\mathscr{S}}

 %% range

 %% freccia lunga di implicazione ==>

 %% rho tilde
 %% A tilde
 %% R tilde

%%%%%%%%

 %% Lebesgue measure

%\renewcommand{\L}{{\textsl{L}\hspace{0.17ex}}} %% Lebesgue spaces
\newcommand\enne{\mathbb{N}} %% set of positive integers

\newcommand\X{\textsl{X}\hspace{0.1ex}}

%\newcommand{\C}{{\textsl{C}\hspace{0.18ex}}} %% continuous functions spaces
 %% duality in H
\def\vuoto{\varnothing} %% empty set
 % topological closure

 %% right closed interval
 %% Banach space X
 %% duality in H
 %% duality in H
%\DeclareMathOperator{\D}{D\!} %% differential measure
 %% Step maps

%%%%%%%Simboli Riccardo

 %% usato
\newcommand{\RR}{\mathbb{R}}  %% usato
\newcommand{\CC}{\mathbb{C}}

\newcommand{\mr}{\mathrm}

\newcommand{\mc}{\mathcal}

%\newcommand{\II}{\mc{I}}

%%%%% colori

\definecolor{blu1}{rgb}{0.1,0.1,1}

\definecolor{verde}{rgb}{0.1,0.4,0.2}

\definecolor{pink}{rgb}{1.0, 0.33, 0.64}

\newcommand{\convstar}{\star}%{\Asterisk}%{\ast_c}

% ~ ` '

%%%%%%% DOCUMENT BEGINNING 

\begin{document}

%%%%%%%%% TOP MATTER

\title[On the generators of Clifford semigroups]{On the generators of Clifford semigroups: polynomial resolvents and their integral transforms}

\author{Riccardo Ghiloni and Vincenzo Recupero}

\thanks{R. Ghiloni is a member of GNSAGA-INDAM. V. Recupero is a member of GNAMPA-INDAM}

\address{\textbf{Riccardo Ghiloni}\\
        Dipartimento di Matematica\\ 
        Universit\`a di Trento\\
        Via Sommarive 14\\ 
        38123 Trento\\ 
        Italy. \newline
        {\rm E-mail address:}
        {\tt ghiloni@science.unitn.it}}
        
\address{\textbf{Vincenzo Recupero}\\
        Dipartimento di Scienze Matematiche\\ 
        Politecnico di Torino\\
        Cor\-so Duca degli Abruzzi 24\\ 
        10129 Torino\\ 
        Italy. \newline
        {\rm E-mail address:}
        {\tt vincenzo.recupero@polito.it}}

\subjclass[2010]{30G35, 47D03, 47A60, 47A10}
%%30G35 Functions of hypercomplex variables and generalized variables
%%47D03 Groups and semigroups of linear operators
%%47A60 Functional calculus for linear operators
%%47A10 Spectrum, resolvent
%%46S05 Quaternionic functional analysis 
\keywords{Semigroups in the noncommutative setting, Slice regular semigroups; Spectrum, resolvent; Laplace transform, Functional calculus, Quaternions, Clifford algebras; Functions of hypercomplex variables} 
   
\date{}

%%%%%%%%%% ABSTRACT 

\begin{abstract}
This paper deals with generators $\A$ of strongly continuous right linear semigroups in Banach two-sided spaces whose set of scalars is an arbitrary Clifford algebra $\mathit{C}\ell(0,n)$. We study the invertibility of operators of the form $P(\A)$, where $P(\x)\in\RR[\x]$ is any real polynomial, and we give an integral representation for $P(\A)^{-1}$ by means of a Laplace-type transform of the semigroup $\T(t)$ generated by $\A$. In particular, we deduce a new integral representation for the operator $(\A^2 - 2\re (q) \,\A + |q|^2)^{-1}$. As an immediate consequence, we also obtain a new proof of the well-known integral representation for the $S$-resolvent operator of $\A$ (also called spherical resolvent operator of $\A$).
\end{abstract}

%%%%%%%%%% TITLE 

\maketitle

%\tableofcontents

\thispagestyle{empty}

%%%%%%%%%% DOCUMENT BEGINS

\begin{flushright}
{\it dedicated to Professor Klaus G\"{u}rlebeck}
\end{flushright}

\vspace{1em}

\section{Introduction and main results}

Quaternionic functional analysis has probably its original motivation in the seminal paper \cite{BirNeu36}, where it is pointed out that quantum mechanics may be formulated, not only on complex Hilbert spaces, but also on Hilbert spaces whose set of scalars is $\quat$, the noncommutative real algebra of quaternions.

Many papers have been devoted to the development of quantum mechanics in the quaternionic framework (see, e.g., \cite{FinJauSchSpe62, Emc63, HorBie84, Adl95}), whose natural setting is a Hilbert two-sided $\quat$-module $X$, with the space of bounded linear operators acting on it replaced by the set $\Lin^\r(X)$ of bounded \emph{right} linear operators. 
However, a full development of quaternionic quantum mechanics was prevented by the lack of suitable quaternionic spectral notions, indeed, as observed in \cite{CoGeSaSt} (see also \cite{GhiMorPer13}), the classical definitions of spectrum and resolvent operator do not allow to define a noncommutative functional calculus. 

A rigorous formulation of a quaternionic spectral theory has been provided for the first time in \cite{CoGeSaSt07} where one can find the definition of the notions of \emph{$S$-resolvent set} 
$\rho_\s(\A)$,  \emph{$S$-resolvent operator} $S^{-1}(q,\A)$ at $q$, and \emph{$S$-spectrum} 
$\sigma_\s(\A)=\quat\setminus\rho_\s(\A)$ of a right linear operator $\A$ on a quaternionic Banach space $X$. They are given by
\begin{align}
  & \rho_\s(\A) := \{q \in \quat\ :\ \exists (\A^2 - 2\re (q) \,\A + |q|^2)^{-1} \in \Lin^\r(X)\}, \label{S1} \\
  & S^{-1}(q,\A) :=\Q_q(\A) \bar{q} - \A \Q_q(\A), \qquad q \in \rho_\s(\A),\label{S2}
\end{align}
where
\[
\Q_q(\A) := (\A^2 - 2\re (q) \,\A + |q|^2)^{-1}, \qquad q \in \rho_\s(\A).
\]
For a historical account of these notions we refer the reader to the Introduction of \cite{ColGan19}. We warn the reader that in the present paper, coherently with the notations we used in \cite{GhiRec16, GhiRec18}, we employ the more geometrically expressive terms \emph{spherical resolvent operator} and \emph{spherical spectrum} in place of $S$-resolvent operator and $S$-spectrum, respectively; moreover, we use the symbol $\C_q(\A)$ for the generalized $\C$auchy-like resolvent operator $S^{-1}(q,\A)$. Thus, given $q \in \rho_\s(\A)$, we have that $\C_q(\A)=S^{-1}(q,\A)=\Q_q(\A) \bar{q} - \A \Q_q(\A)$. In addition we call $\Q_q(\A)$ \emph{spherical quasi-resolvent operator of $\A$ at $q$.} Sometimes, in literature, $\Q_q(\A)$ is called pseudo-resolvent operator of $\A$ at $q$. However, in the classical complex case, the term ``pseudo-resolvent operator'' has a different meaning (cf., e.g., \cite[Section 1.9]{Paz83}). For this reason, we think that it is important to keep the term ``spherical pseudo-resolvent operator'' (or ``$S$-pseudo-resolvent operator'') for future possible generalizations of the complex notion of pseudo-resolvent operator to the hypercomplex setting.
 
The above definitions \eqref{S1} and \eqref{S2} permit to develop a noncommutative functional calculus for right linear operators on a Banach two-sided module over $\quat$ (and over a Clifford algebra as well, cf. \cite{ColSabStr08, ColSab09, ColSab10, CoGeSaSt, CoGeSaSt10, CoSaSt, GhiMorPer13}) and to deduce in \cite{ACK16} (see also \cite{GhiMorPer-bis}) the spectral representation theorems for normal operators in the quaternionic Hilbert setting. A wider bibliograpy can be found in the recent accounts on the theory \cite{ColGanKim18,ColGan19}.

The mentioned noncommutative functional calculus is intimately connected to the theory of slice regular functions, introduced in \cite{GeSt}, which extends to quaternions the classical concept of holomorphic function. They form a class of functions admitting a local power series expansion at every point of their domain of definition (cf. \cite{GenSto12}), including polynomials with quaternionic coefficients on one side, and they admit
a Cauchy-type integral representation formula with a suitable quaternionic version of the kernel
proved for the first time in \cite{ColSab2009} (see also \cite{CoGeSa,GhiPerRec17}).

The next natural stage in this analysis is the development of a noncommutative theory of right linear operator semigroups which was developed in \cite{ColSab11, ACS2015, GhiRec16, ACFGK2017, GhiRec18, ColGan19}. In the classical complex theory a fundamental tool is provided by the integral representation of the resolvent operator of a generator $\A$ by means of the Laplace transform of the semigroup $\T(t)$ generated by $\A$. An analogous integral representation in the quaternionic case for the spherical resolvent operator $\C_q(\Q)$ is shown in \cite{ColSab11} and a proof is provided in \cite{GhiRec18} using techniques from slice regular function theory.

The purpose of the present paper is to study the invertibility of operators of the form $P(\A)$ where $P(\x)$ is an arbitrary polynomial with real coefficients of degree at least~$2$, including $P(\x)=\Delta_q(\x)=\x^2-2\re(q)\x+|q|^2$. Under natural conditions, we prove that $P(\A)^{-1}$ exists and belongs to $\Lin^\r(X)$. Furthermore, we provide an integral representation for $P(\A)^{-1}$ by means of a Laplace-type transform of the semigroup $\T(t)$ generated by $\A$. We extend this integral representation to operators of the form $\sum_{j=0}^{d-1}\A^jP(\A)^{-1}p_j$, where $d$ is the degree of $P$ and $p_0,\ldots,p_{d-1}$ are arbitrarily chosen quaternions. In the case $P(\x)=\Delta_q(\x)$, we obtain a new integral representation for $\Q_q(\A)$ and, setting $p_0:=\overline{q}$ and $p_1:=-1$, we discover again the well-known integral representation for $\C_q(\A)$ via the quaternionic Laplace transform (see \cite{ColSab09,ColSab11}). This gives also a new proof of the integral representation for $\C_q(\A)$, which avoids the use of slice regular function techniques. Our results are valid not only on quaternions but also on a class of real associative $^*$-algebras including, as the main examples, all Clifford algebras $\mathit{C}\ell(0,n)$.

Let $n\in\enne$, let $\RR_n$ be the Clifford algebra $\mathit{C}\ell(0,n)$ equipped with the Clifford conjugation and the Clifford operator norm $|\cdot|$. Consider a Banach two-sided $\RR_n$-module $X$ with norm $\|\cdot\|$ and the set $\Lin^\r(X)$ of all bounded right linear operators on $X$ (all the precise de\-finitions will be recalled in the next section).

Let $m\in\enne$ and let $P(\x)=\sum_{k=0}^{m+2}\x^ka_k\in\RR[\x]$ be a polynomial with real coefficients in the indeterminate~$\x$. Suppose $P$ has degree $m+2$, that is, $a_{m+2}\neq0$. Given a right linear operator $\A:D(\A) \funzione X$, we define the right linear operator $P(\A):D(\A^{m+2}) \funzione X$ simply by replacing $\x$ with $\A$, that is, $P(\A):=\sum_{k=0}^{m+2}\A^ka_k$. Denote by $C^\infty(\clsxint{0,\infty};\RR)$ the set of all infinitely many times differentiable functions $g:\clsxint{0,\infty}\funzione\RR$. Consider the following ODE with constant coefficients in the variable $g\in C^\infty(\clsxint{0,\infty};\RR)$:
\begin{equation}\label{ODE}
\begin{cases}
P\left(-\frac{d}{dt}\right)(g)=0\; \text{ on $\clsxint{0,\infty}\,$,} \\
g(0)=g'(0)=\ldots=g^{(m)}(0)=0\,, \\
g^{(m+1)}(0)=(-1)^m(a_{m+2})^{-1}\,, 
\end{cases}
\end{equation}
where $g^{(k)}$ is the $k^{\mr{th}}$-derivative of $g$ and $P\left(-\frac{d}{dt}\right)(g):=\sum_{k=0}^{m+2}g^{(k)}(-1)^ka_k$. Denote by $g_P\in C^\infty(\clsxint{0,\infty};\RR)$ the unique solution of \eqref{ODE}. Recall that, if $\lambda_1,\ldots,\lambda_h$ are the complex roots of the polynomial $P(\x)$ with multiplicity $m_1,\ldots,m_h$, then there exist complex polynomials $Q_1(\x),\ldots,Q_h(\x)\in\CC[\x]$ such that the degree of each $Q_j(\x)$ is $<m_j$ and
\begin{equation}\label{g_P}
g_P(t)=\sum_{j=1}^hQ_j(t)e^{-\lambda_jt}.
\end{equation}  
Define $r_P\in\RR$ by
\begin{equation}
r_P:=\min\{\Re(\lambda_1),\ldots,\Re(\lambda_h)\},
\end{equation}
where $\Re(\lambda_j)$ is the real part of the complex number $\lambda_j$.

Recall that if $\T:\clsxint{0,\infty} \funzione \Lin^r(X)$ is a strongly continuous right linear semigroup then it is also a strongly continuous semigroup with $X$ considered as a real vector space, therefore there exists $\omega\in\RR$ such that $\sup_{t\in\clsxint{0,\infty}}\norma{\T(t)}{}e^{-\omega t}<\infty$ (see, e.g., \cite[Proposition I.5.5, p. 39]{EngNag00}).

Our main result reads as follows.

\begin{Thm}\label{thm:supermain-clifford}
Let $\T:\clsxint{0,\infty} \funzione \Lin^r(X)$ be a strongly continuous right linear semigroup, let $\A:D(\A) \funzione X$ be its generator and let $\omega\in\erre$ be a real constant such that $M:=\sup_{t\in\clsxint{0,\infty}}\norma{\T(t)}{}e^{-\omega t}<\infty$. Then, if $r_P>\omega$, the operator $P(\A)$ is bijective, $P(\A)^{-1}\in\Lin^r(X)$ and it holds:
 \begin{equation}\label{inverso P = integrale-clifford}
 P(\A)^{-1}x=\int_0^\infty\T(t)g_P(t)x \de t \qquad \forall x\in X
 \end{equation}
and
\begin{equation}\label{estimate for P-1-clifford}
 \norma{P(\A)^{-1}}{}  \le M\sum_{j=1}^h\sum_{k=1}^{m_j}\frac{|c_{-k}^{(j)}|}{(r_P-\omega)^k}\,,
 \end{equation}
where $c_{-k}^{(j)}$ is the residue at $\lambda_j$ of the meromorphic function $z\mapsto a_{m+2}(z-\lambda_j)^{k-1}/P(-z)$.

Furthermore, given $(p_0,p_1,\ldots,p_{m+1})\in(\RR_n)^{m+2}$, we have
 \begin{equation}\label{potA P-1-clifford}
 \sum_{j=0}^{m+1}\A^kP(\A)^{-1}p_jx=\int_0^\infty\T(t)\left(\sum_{j=0}^{m+1}g_P^{(j)}(-1)^jp_jx\right) \de t \qquad \forall x\in X.
 \end{equation}
\end{Thm}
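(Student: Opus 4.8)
The plan is to reduce everything to a scalar computation via the classical (real-variable) Laplace transform and then transfer it to the operator level using the semigroup property. First I would observe that, since $r_P>\omega$, the function $g_P$ given by \eqref{g_P} decays like $e^{-r_Pt}$ up to polynomial factors, so each integral $\int_0^\infty\T(t)g_P(t)x\,\de t$ converges absolutely: indeed $\|\T(t)g_P(t)x\|\le M\|x\|e^{\omega t}|g_P(t)|$ and $e^{\omega t}|g_P(t)|$ is integrable on $\clsxint{0,\infty}$ because $\omega<r_P\le\Re(\lambda_j)$ for every $j$. This already shows that the right-hand side of \eqref{inverso P = integrale-clifford} defines a bounded right linear operator $\R\in\Lin^r(X)$; the estimate \eqref{estimate for P-1-clifford} then follows by expanding $g_P$ in partial fractions, i.e. writing $a_{m+2}/P(-z)=\sum_{j,k}c_{-k}^{(j)}(z-\lambda_j)^{-k}$ so that $g_P(t)=\sum_{j,k}c_{-k}^{(j)}\frac{t^{k-1}}{(k-1)!}e^{-\lambda_jt}(-1)^{?}$, bounding $\int_0^\infty e^{(\omega-\Re\lambda_j)t}\frac{t^{k-1}}{(k-1)!}\,\de t=(r_P-\omega)^{-k}$ after the (crude) replacement $\Re\lambda_j\ge r_P$, and summing.

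Next I would show $\R$ is a two-sided inverse of $P(\A)$. The key identity is the ODE \eqref{ODE}: I would compute $P(\A)\R x$ and $\R P(\A)x$ by moving powers of $\A$ inside the integral. For the generator $\A$ of a strongly continuous semigroup one has $\A\int_0^\infty\T(t)f(t)x\,\de t=-\int_0^\infty\T(t)f'(t)x\,\de t-f(0)x$ whenever $f\in C^1$ has suitable decay and $x\in D(\A)$ (integration by parts against $\frac{d}{dt}\T(t)x=\A\T(t)x=\T(t)\A x$), and iterating this gives $\A^k\int_0^\infty\T(t)g_P(t)x\,\de t=(-1)^k\int_0^\infty\T(t)g_P^{(k)}(t)x\,\de t-\sum_{i=0}^{k-1}(-1)^i g_P^{(k-1-i)}(0)\A^i x$. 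Here the boundary terms at $t=0$ survive while those at $t=\infty$ vanish by the decay just established. Summing against the coefficients $a_k$ and using $P(-\frac{d}{dt})(g_P)=0$ kills the integral term, and the initial conditions $g_P(0)=\dots=g_P^{(m)}(0)=0$, $g_P^{(m+1)}(0)=(-1)^m(a_{m+2})^{-1}$ make the surviving boundary sum collapse to exactly $x$. This gives $P(\A)\R x=x$ for $x\in D(\A^{m+2})$ by a density/closedness argument, and $\R P(\A)x=x$ directly for $x\in D(\A^{m+2})$; since $\R$ is bounded and $P(\A)$ is closed, bijectivity of $P(\A)$ and $\R=P(\A)^{-1}\in\Lin^r(X)$ follow. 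The only genuinely non-commutative point to check is that all the operators $\T(t)$, $\A^k$ and the scalars commute appropriately with the right module structure, which is automatic since the $a_k$ are \emph{real} and $\T(t),\A$ are right linear, so no subtlety arises here; the formula \eqref{potA P-1-clifford} with general Clifford coefficients $p_j$ then follows by the same integration-by-parts computation, now applied to $\sum_j\A^j\R p_j$ — the hypercomplex $p_j$ just ride along on the right.

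The main obstacle I anticipate is the justification of pulling the (unbounded, closed) operators $\A^k$ in and out of the vector-valued integral and of the repeated integration by parts: one must verify that $t\mapsto\T(t)g_P^{(j)}(t)x$ and its antiderivatives lie in $D(\A)$, that the integrals $\int_0^\infty\T(t)g_P^{(j)}(t)\A^i x\,\de t$ all converge (again from $r_P>\omega$, since differentiating $g_P$ does not change its exponential decay rate), and that the closedness of $\A$ lets one commute $\A$ with $\int_0^\infty$. A clean way to organize this is to first prove \eqref{potA P-1-clifford} for $x\in D(\A^{m+2})$ by the iterated integration by parts, then extend to all $x\in X$ by density together with boundedness of both sides — the left side because $\A^jP(\A)^{-1}$ is bounded for $j\le m+1$ (it equals a bounded operator by the resolvent-type estimates, or by noting $\A^j P(\A)^{-1}$ maps into $X$ boundedly since $P$ has degree $m+2$), and the right side by the integrability of $g_P^{(j)}e^{\omega t}$. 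Everything else is routine real analysis plus the elementary theory of linear ODEs with constant coefficients.
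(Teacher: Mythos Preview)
Your proposal is correct and follows essentially the same architecture as the paper: show the integral converges, verify $P(\A)\R=\Id$ by pushing the $\A^k$ onto the kernel $g_P$ and invoking the ODE plus initial conditions, then use that $g_P$ is real-valued to get the other direction, and finally read off the norm estimate from the partial-fraction expansion of $a_{m+2}/P(-z)$.

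The one technical difference worth noting is how the identity $\A\Lap(g)x=-g(0)x-\Lap(g')x$ is obtained. You plan to integrate by parts against $\frac{d}{dt}\T(t)x=\T(t)\A x$, which forces $x\in D(\A)$ and then requires a density/closedness extension at the end. The paper instead computes the difference quotient $\frac{\T(h)-\Id}{h}\Lap(g)x$ directly, using only the semigroup law $\T(t+h)=\T(h)\T(t)$ and a change of variable $t\mapsto t-h$; this shows $\Lap(g)x\in D(\A)$ for \emph{every} $x\in X$ with no smoothness assumption on $x$, so no density argument is needed. Iterating, one gets $\Lap(g_P)(X)\subseteq D(\A^{m+2})$ outright, and the commutation $\Lap(g_P)P(\A)=P(\A)\Lap(g_P)$ (from the real-valuedness of $g_P$) finishes the two-sided inverse. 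Your route works too, but the paper's device sidesteps the bookkeeping you flag as ``the main obstacle''.
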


Let $Q_{\RR_n}$ be the quadratic cone of $\RR_n$ (see \eqref{quadratic cone} for the definition) and let $q\in Q_{\RR_n}$. Define the function $g_q: \erre \funzione \erre$ by 
\begin{equation}\label{g_q-clifford}
g_q(t) := te^{-\re(q)t}\sinc(t|\im(q)|), \qquad t \in \erre,
\end{equation} 
where $\re(q)$ and $\im(q)$ are the real and imaginary parts of $q$, respectively. We recall that $\sinc : \erre \funzione \erre$ is the \emph{unnormalized sinc function}, that is, the real-valued continuous function $\xi$ on $\erre$ defined by $\xi(0)=1$ and $\xi(r) = \sin(r)/r$ for all $r \neq 0$. 

Thanks to the preceding result, we are able to prove the following:

\begin{Thm}\label{thm:main-clifford}
Let $\T:\clsxint{0,\infty} \funzione \Lin^r(X)$ be a strongly continuous right linear semigroup, let $\A:D(\A) \funzione X$ be its generator, and let $\omega\in\erre$ be a real constant such that 
$M:=\sup_{t\in\clsxint{0,\infty}}\norma{\T(t)}{}e^{-\omega t}<\infty$. Consider any $q\in Q_{\RR_n}$ and set $a:=\re(q)$ and $b:=|\im(q)|$. If $\re(q)>\omega$, then
we have that $q \in \rho_\s(\A)$ and it holds:
\begin{alignat}{3}
& \Q_q(\A)x = \int_0^\infty\T(t)g_q(t) x\de t =\int_0^\infty\T(t)te^{-ta}\sinc(tb) x\de t, \label{eq:Q_S}\\
& \A\Q_q(\A)x = -\int_0^\infty\T(t)g'_q(t) x\de t =
   -\int_0^\infty\T(t)e^{-ta}(\cos(tb) - at\sinc(tb)) x\de t, \label{eq:AQ_S}\\
& \C_q(\A)x=\int_0^\infty\T(t)e^{-tq}x\de t \label{eq:C_S}
\end{alignat}
for every $x \in X$.
Moreover, we have:
\begin{align}
&\norma{\Q_q(\A)}{} \leq \frac{M}{(\re(q)-\omega)^2} \,, \label{eq:estimateQ}\\
&\norma{\C_q(\A)}{} \leq \frac{M}{\re(q)-\omega}\,.\label{eq:estimateC}
\end{align}
\end{Thm}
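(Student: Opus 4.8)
The plan is to derive Theorem~\ref{thm:main-clifford} as a direct specialization of Theorem~\ref{thm:supermain-clifford} applied to the particular real polynomial $P(\x)=\Delta_q(\x)=\x^2-2\re(q)\x+|q|^2$. First I would check that this $P$ has degree exactly $m+2$ with $m=0$, so $a_2=1$, $a_1=-2\re(q)=-2a$, $a_0=|q|^2=a^2+b^2$, and that $P(\A)=\A^2-2\re(q)\A+|q|^2$ is precisely the operator $\Delta_q(\A)$ whose inverse $\Q_q(\A)$ appears in the definition of $\rho_\s(\A)$. The complex roots of $P$ are $\lambda_{1,2}=a\pm\ui b$ (with $a=\re(q)$, $b=|\im(q)|\geq0$); hence $r_P=\min\{\re(\lambda_1),\re(\lambda_2)\}=a=\re(q)$. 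Thus the hypothesis $\re(q)>\omega$ is exactly the hypothesis $r_P>\omega$ of Theorem~\ref{thm:supermain-clifford}, which immediately yields that $\Delta_q(\A)$ is bijective with bounded right-linear inverse; by definition~\eqref{S1} this means $q\in\rho_\s(\A)$, and $\Q_q(\A)=P(\A)^{-1}\in\Lin^r(X)$.

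Next I would identify the function $g_P$ for this $P$ by solving the ODE~\eqref{ODE}, which here reads $g''-2ag'+(a^2+b^2)g=0$ with $g(0)=0$, $g'(0)=1$ (since $m=0$, $(-1)^m(a_{m+2})^{-1}=1$). The unique solution is, when $b>0$, $g_P(t)=e^{at}\sin(bt)/b=te^{at}\sinc(tb)$, and by continuity in $b$ the same formula $g_P(t)=te^{at}\sinc(tb)$ also covers the degenerate case $b=0$, where it reads $te^{at}$. But the semigroup Laplace transform in~\eqref{inverso P = integrale-clifford} is built from $\T(t)$ and the generator of $\T$ is $\A$, so I must be careful about signs: the roots entering $g_P$ via~\eqref{g_P} are those of $P(\x)$, and $g_P(t)=\sum Q_j(t)e^{-\lambda_j t}$, so with $\lambda_{1,2}=a\pm\ui b$ we get $g_P(t)=e^{-at}\sin(bt)/b=te^{-at}\sinc(tb)$. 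Comparing with~\eqref{g_q-clifford}, this is exactly $g_q(t)$. Hence~\eqref{inverso P = integrale-clifford} gives $\Q_q(\A)x=\int_0^\infty\T(t)g_q(t)x\,\de t$, which is~\eqref{eq:Q_S}; the rewriting $g_q(t)=te^{-ta}\sinc(tb)$ is just the definition.

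For~\eqref{eq:AQ_S} and~\eqref{eq:C_S} I would invoke the second part of Theorem~\ref{thm:supermain-clifford}, formula~\eqref{potA P-1-clifford}, with $m=0$ so that the index $j$ runs over $\{0,1\}$ and we may choose $(p_0,p_1)\in(\RR_n)^2$. Taking $p_0=0$, $p_1=-1$ gives $\A P(\A)^{-1}(-1)x=\int_0^\infty\T(t)g_P'(t)(-1)(-1)^1\,x\,\de t$, i.e.\ $-\A\Q_q(\A)x=\int_0^\infty\T(t)g_P'(t)x\,\de t$, hence $\A\Q_q(\A)x=-\int_0^\infty\T(t)g_q'(t)x\,\de t$; differentiating $g_q(t)=te^{-ta}\sinc(tb)=e^{-ta}\sin(tb)/b$ yields $g_q'(t)=e^{-ta}(\cos(tb)-at\sinc(tb))$, giving~\eqref{eq:AQ_S}. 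Then for~\eqref{eq:C_S} I would take $p_0=\overline{q}$, $p_1=-1$ in~\eqref{potA P-1-clifford}: the left side becomes $\Q_q(\A)\overline{q}x-\A\Q_q(\A)x=\C_q(\A)x$ by the definition~\eqref{S2}, and the right side becomes $\int_0^\infty\T(t)\big(g_q(t)\overline{q}+g_q'(t)\big)x\,\de t$. It remains to verify the scalar identity $g_q(t)\overline{q}+g_q'(t)=e^{-tq}$ in $\RR_n$; writing $q=a+\im(q)$ with $\im(q)^2=-b^2$, one has $e^{-tq}=e^{-ta}e^{-t\im(q)}=e^{-ta}\big(\cos(tb)-\tfrac{\sin(tb)}{b}\im(q)\big)$, and substituting $g_q(t)=e^{-ta}\sin(tb)/b$, $g_q'(t)=e^{-ta}(\cos(tb)-a\sin(tb)/b)$ and $\overline{q}=a-\im(q)$ gives exactly this expression; so~\eqref{eq:C_S} follows.

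Finally, the norm bounds~\eqref{eq:estimateQ} and~\eqref{eq:estimateC} come from~\eqref{estimate for P-1-clifford} together with the integral representations just obtained. For~\eqref{eq:estimateQ} I specialize~\eqref{estimate for P-1-clifford}: here $h=2$ (or $h=1$ if $b=0$), each $m_j=1$, and $c_{-1}^{(j)}$ is the residue at $\lambda_j$ of $z\mapsto 1/P(-z)$; since $P(-z)=z^2+2az+(a^2+b^2)=(z+\lambda_1)(z+\lambda_2)$ with $-\lambda_1,-\lambda_2$ its roots, a short residue computation gives $|c_{-1}^{(1)}|+|c_{-1}^{(2)}|=1/b\cdot(\text{something})$ — more cleanly, I would simply estimate directly from~\eqref{eq:Q_S}: $\|\Q_q(\A)\|\leq\int_0^\infty\|\T(t)\|\,|g_q(t)|\,\de t\leq M\int_0^\infty e^{\omega t}\,te^{-at}|\sinc(tb)|\,\de t\leq M\int_0^\infty te^{-(a-\omega)t}\,\de t=M/(a-\omega)^2$, using $|\sinc|\leq1$, which is~\eqref{eq:estimateQ}. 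Likewise~\eqref{eq:estimateC} follows from~\eqref{eq:C_S}: $\|\C_q(\A)\|\leq M\int_0^\infty e^{\omega t}|e^{-tq}|\,\de t=M\int_0^\infty e^{\omega t}e^{-at}\,\de t=M/(a-\omega)$, since $|e^{-tq}|=e^{-t\re(q)}=e^{-ta}$ (the imaginary part of $q$ lying in the quadratic cone has $\im(q)^2=-b^2$, so $e^{-t\im(q)}$ has modulus $1$). The main obstacle I anticipate is bookkeeping the sign conventions consistently — the roots of $P(\x)$ versus those of $P(-z)$, and the factor $(-1)^j$ in~\eqref{potA P-1-clifford} — together with checking the scalar identity $g_q(t)\overline q+g_q'(t)=e^{-tq}$ carefully in the noncommutative algebra $\RR_n$, where one must know that $q$ commutes with its real and imaginary parts so that $e^{-tq}$ factors as claimed; this last point uses that $q$ belongs to the quadratic cone $Q_{\RR_n}$.
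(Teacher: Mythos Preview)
Your proposal is correct and follows essentially the same strategy as the paper: specialize Theorem~\ref{thm:supermain-clifford} to $P(\x)=\Delta_q(\x)$, identify $g_P=g_q$, then read off \eqref{eq:Q_S}, \eqref{eq:AQ_S}, \eqref{eq:C_S} from \eqref{inverso P = integrale-clifford} and \eqref{potA P-1-clifford} with the same choices of $(p_0,p_1)$, and obtain the norm bounds by the direct integral estimates you write (the paper does exactly this rather than specializing \eqref{estimate for P-1-clifford}). One slip: your initial ODE should read $g''+2ag'+(a^2+b^2)g=0$ (the operator in~\eqref{ODE} is $P(-d/dt)$, so the middle sign flips), but you immediately catch and correct this via the explicit formula~\eqref{g_P}, arriving at the right $g_P=g_q$.
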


Here is the plan of the paper. In the following section we recall all the needed precise definitions.  In Section 3 we present the preceding theorems in the more general case of certain real $*$-algebras, including all the $\RR_n$'s. Section \ref{S:proofs} is devoted to the proofs of these theorems. Finally, in Section \ref{Qq^n} we apply the main theorem in order to derive an integral representation of the integer powers of the spherical quasi-resolvent operator and the estimate of their norms; this extends to $\Q_q(\A)$ our Theorem 6.6 in \cite{GhiRec18} concerning the Laplace-type transform for the integer slice powers of $\C_q(\A)$.

%%%

\section{Preliminaries}\label{S:preliminaries}

Throughout all the paper we will assume that $\alg$ is a nontrivial finite dimensional $\erre$-vector space endowed with a bilinear product $\alg \times \alg \funzione \alg: (p,q) \longmapsto pq$ with unit $1_\alg$, and with a mapping
$\alg \funzione \alg : q \longmapsto q^c$ called \emph{$^*$-involution}, which is an $\erre$-linear mapping such that 
$(q^c)^c = q $,  $(pq)^c = q^c p^c$, and $r^c = r$ for all $p, q \in \alg, r \in \erre \subseteq \alg$, where we are identifying $\erre$ with the subalgebra of $\alg$ generated by $1_\alg$ by means of the algebra isomorphism 
$\erre \funzione \erre 1_\alg : r \longmapsto r1_\alg$. Therefore we can write $1 = 1_\alg$ and we summarize the 
previous assumptions by saying that
\begin{equation}\label{assumption on A}
\begin{split}
  & \text{$\alg$ is a finite dimensional associative nontrivial real $^*$-algebra} \\
  & \text{with $^*$-involution $q \longmapsto q^c$ and unit $1$.}
\end{split}
\end{equation}
Under the previous assumptions we define the \emph{imaginary sphere $\su_\alg$} in $\alg$ by
\begin{equation}\label{imaginary sphere}
  \su_\alg := \left\{q \in \alg\ :\ q^c = -q,\ q^2 = -1\right\}.
\end{equation}
In the remainder of the paper we will assume that
\begin{equation}\label{S_A nonvuota}
  \su_\alg \neq \varnothing.
\end{equation}
Condition \eqref{S_A nonvuota} in particular implies that $\alg$ cannot be equal to $\erre$. We set
\begin{align}
  & \ci_\j := \left\{r+s\j \in \alg\ :\ r, s \in \erre\right\}, \qquad 
        \j \in \su_\alg, \\
  & Q_\alg := \bigcup_{\j \in \su_\alg} \ci_\j, \label{quadratic cone}
\end{align}
the set $Q_\alg$ being called \emph{quadratic cone of $\alg$}. The \emph{real part $\re(q)$} and the 
\emph{imaginary part $\im(q)$} of an element $q \in \alg$ are defined by
\begin{equation}\label{real and imaginary parts}
  \re(q) := (q+q^c)/2, \quad \im(q) := (q-q^c)/2,
  \qquad 
  q \in \alg.  
\end{equation}
Notice that in general $\re(q)$ and $\im(q)$ are not real numbers, at variance with the customary complex notations  $\Re(z) := (z + \overline{z})/2 \in \erre$ and $\Im(z) := (z - \overline{z})/2i \in \erre$ for $z \in \ci$. 

We finally observe that $qq^c \in \erre$ for every $q \in Q_\alg$ and we assume that
\begin{equation}\label{eq:assumption-2bis}
\begin{split}
 & \alg \text{ is endowed with a complete norm } |\cdot| \text{ such that }\\
 & \text{$|q_1q_2| \le |q_1||q_2|$ for every $q_1, q_2 \in \alg$ and }
 |q|^2=qq^c \text{ for every } q \in Q_\alg.%\label{eq:assumption-2tris}
\end{split}
\end{equation}

The equivalence of the above definitions with other presentations (e.g. \cite{GhiPer11} is provided in \cite{GhiRec18}). We recall here that
 \begin{equation}\label{eq:intersection}
  \ci_\j \cap \ci_\k = \erre \qquad \forall \j, \k \in \mathbb{S}_\alg, \ \j \neq \pm \k.
\end{equation}

\begin{Example}[Clifford algebras]\label{examples of A}
For $n \in \enne \setmeno \{0\}$ let $\mc{P}(n)$ be the power set of $\{1,\ldots,n\}$. 
If we identify $\RR$ with the vector subspace $\RR \times \{0\}$ of $\RR^{2^n}=\RR \times \RR^{2^n-1}$ and we set $e_{\vuoto}:=1$, then we denote by $\{e_K\}_{K \in \mc{P}(n)}$ the canonical basis of $\RR^{2^n}$. For convenience, we set $e_k := e_{\{k\}}$ if $k \in \{1,\ldots,n\}$ and we define a real bilinear and associative product on $\erre^{2^n}$ by imposing that $1$ is the neutral element and that
\begin{align*} 
 & \text{$e_k^2=-1$ and $e_ke_h=-e_he_k$ if $k, h \in \{1,\ldots,n\}$ with $k \neq h$}, \\
 & \text{$e_K=e_{k_1}\cdots e_{k_s}$ if $K =\{k_1,\ldots,k_s\} \in \mc{P}(n) \setmeno \{\vuoto\}$  with 
 $k_1<\ldots<k_s$}.
\end{align*}
The \textit{Clifford conjugation} of $\erre^{2^n}$ is the $^*$-involution $q \longmapsto q^c := \overline{q}$  defined by
\[\textstyle
  \overline{q}:=\sum_{K \in \mc{P}(n)}(-1)^{|K|(|K|+1)/2}a_Ke_k \quad 
  \mbox{if } q=\sum_{K \in \mc{P}(n)}a_Ke_k \in \RR_n, \; a_K \in \RR,
\]
where $|K|$ indicates the cardinality of the set $K$.
Endowing $\erre^{2^n}$ with the above defined product and with the Clifford conjugation, we obtain a real $^*$-algebra $\alg$ satisfying \eqref{assumption on A}, called \emph{Clifford algebra $\mathit{C}\ell(0,n)$ of signature $(0,n)$}, which is denoted also by $\erre_{n}$. 
Observe that $\erre_1$ and $\erre_2$ are isomorphic to $\ci$ and $\quat$, respectively. Moreover $\erre_n$ is not commutative if $n \geq 2$. If $n \ge 3$ then $\erre_n$ has zero divisors, indeed 
$(1-e_{\{1,2,3\}})(1+e_{\{1,2,3\}})=0$. One verifies that a point $q=\sum_{K \in \mc{P}(n)}a_Ke_K$ of $\erre_n$ with $a_K \in \erre$ belongs to the quadratic cone $Q_{\erre_n}$ of $\erre_n$ if and only if it satisfies the following conditions 
\[
  a_K=0 \quad \mbox{and} \quad \langle q , qe_K \rangle_{2^n}=0 \quad 
\text{for every $K \in \mc{P}(n) \setmeno \{\vuoto\}$ with $e_K^2=1$},
\]
where $\langle \cdot , \cdot \rangle_{2^n}$ denotes the standard scalar product on $\RR^{2^n}$. 
On $\erre_n$ it is defined the following submultiplicative norm, called \textit{Clifford operator norm}:
$|q|_{\mathit{C}\ell}:=\sup\{|qa|_{2^n} \in \RR \, : \, |a|_{2^n}=1\}$,
where $|\cdot|_{2^n}$ indicates the Euclidean norm of $\erre^{2^n}$. It turns out that:
\begin{itemize}
 \item[(a)] $Q_{\erre_n}=\erre_n$ if and only if $n \in \{1,2\}$. In particular, $\erre_1$ and $\erre_2$ are division algebras.
 \item[(b)] $|q|_{\mathit{C}\ell}=|x|=\sqrt{x\overline{x}}$ for every $x \in Q_{\erre_n}$ and hence $|\cdot|_{\mathit{C}\ell}=|\cdot|$ if $n \in \{1,2\}$.
\end{itemize}
Notice that if $n \geq 3$ then the Euclidean norm $|\cdot|_{2^n}$ of $\erre_n$ is not submultiplicative (e.g. 
 $|(1+e_{\{1,2,3\}})^2|=\sqrt{8}>2=|1+e_{\{1,2,3\}}|^2$).
Endowing $\erre_n$ with Clifford conjugation and Clifford operator norm, we obtain a real $^*$-algebra 
$\alg$ satisfying \eqref{S_A nonvuota} and \eqref{eq:assumption-2bis}.
For further details we refer the reader to \cite{GM91,GHS08}.
\end{Example}

\begin{Example}[Complex numbers and quaternions]
If $n \in \enne \setmeno \{0\}$ and $\erre_n$ denotes the Clifford algebra of signature $(0,n)$ recalled in the previous Example \ref{examples of A}, then we have:
\begin{itemize}
\item[(i)] $\erre_1 = \ci$ with $e_1 = i$, where $z \longmapsto z^c=\bar{z}$ is the standard conjugation and $|\cdot|$ is the Euclidean norm;
\item[(ii)] $\erre_2$ is the algebra of quaternions $\quat$ with $i := e_1$, $j := e_2$, $k := e_3$, where $q = a+bi+cj+dk \longmapsto q^c=\bar{q} = a-bi-cj-dk$ and $|\cdot|$ is the euclidean norm.
\end{itemize}
\end{Example}

\begin{Def}
If $\alg$ satisfies \eqref{assumption on A} then a \emph{two-sided $\alg$-module} is a commutative group 
$(X,+)$ endowed with a left scalar multiplication $\alg \times X \funzione X : (q,x) \longmapsto qx$ and a right scalar multiplication $X \times \alg \funzione X : (x,q) \longmapsto xq$ such that 
\begin{alignat}{5}
 & q(x+y) = qx + qy, &\quad& (x+y)q = xq + yq	&\qquad&\forall x, y \in X,	&\quad &\forall q \in \alg, \notag \\
 & (p+q)x = px + qx, &\quad& x(p+q) = xp + xq	&\qquad& \forall x \in X,     &\quad &\forall p, q \in \alg, \notag \\   
 & 1x = x 	= x1	       &\quad&                              &\qquad& \forall x \in X,     &                                       \notag \\  
 & p(qx) = (pq)x,      &\quad& (xp)q = x(pq)  	&\qquad&    \forall x \in X, 	&\quad &\forall p, q \in \alg, \notag \\
 & p(xq) = (p x)q      &\quad& \qquad                 &\qquad&        \forall x \in X, &\quad &\forall p, q \in \alg,   \notag \\
 & r x = x r 	      &\quad&  \qquad                 &\qquad&   \forall x \in X, &\quad &\forall r \in \erre.  \notag%\label{prod per reali}
\end{alignat}
If $Y$ is a commutative subgroup of $X$ then $Y$ is called a \emph{left $\alg$-submodule} if $qx \in Y$ whenever $x \in Y$ and $q \in \alg$. Instead $Y$ is called a \emph{right $\alg$-submodule} of $X$ if $xq \in Y$ whenever $x \in Y$ and $q \in \alg$. Finally $Y$ is called a \emph{two-sided $\alg$-submodule} of $X$ if it is both a left and a right $\alg$-submodule of $X$.
\end{Def}

\begin{Def}
Assume \eqref{assumption on A} and \eqref{eq:assumption-2bis} hold. A two-sided $\alg$-module $X$ is called a \emph{normed two-sided $\alg$-module} if it is endowed with a \emph{$\alg$-norm on $X$}, that is, a function
$\norma{\cdot}{} : X  \funzione \clsxint{0,\infty}$ such that
\begin{alignat}{3}
  & \norma{x}{} = 0 \ \Longleftrightarrow \ x = 0, \notag \\
  & \norma{x + y}{} \le \norma{x}{} + \norma{y}{} 		& \qquad & \forall x, y \in X,  \notag \\
  & \norma{q x}{} \le |q| \, \norma{x}{}, \quad  \norma{x q}{} \le |q| \, \norma{x}{} 	
  & \qquad & \forall x \in X, \quad \forall q \in \alg. \label{eq:homog} 
\end{alignat}
We say that $X$ is a \emph{Banach two-sided $\alg$-module} if the metric 
$d : X \times X \funzione \clsxint{0,\infty} : (x,y) \longmapsto \norma{x-y}{}$ is complete.
\end{Def}

Let us recall the following result (cf. \cite[Lemma 3.3]{GhiRec18}).

\begin{Lem}\label{L:norma moltiplicativa}
Assume \eqref{assumption on A} and \eqref{eq:assumption-2bis} hold, and let $X$ be a normed two-sided $\alg$-module. Then
\begin{equation}\label{norma moltiplicativa}
\norma{qx}{} = \norma{xq}{} = |q|\norma{x}{} \qquad \forall x \in X, \quad \forall q \in Q_\alg.
\end{equation}
\end{Lem}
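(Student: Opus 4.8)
The plan is to prove the identity $\norma{qx}{}=\norma{xq}{}=|q|\norma{x}{}$ for every $x\in X$ and every $q\in Q_\alg$ by first disposing of the trivial cases and then exploiting the fact that every element of the quadratic cone which is not real lies in a commutative subalgebra isomorphic to $\CC$, so that it has a genuine inverse with controlled norm. First I would note that for $q=0$ the statement is immediate, and for $q\in\erre\setmeno\{0\}$ it follows from the last axiom in \eqref{eq:homog} applied to both $q$ and $q^{-1}=1/q$, which gives $\norma{qx}{}\le|q|\norma{x}{}=|q|\norma{q^{-1}(qx)}{}\le|q|\,|q^{-1}|\,\norma{qx}{}=\norma{qx}{}$, forcing equality (and similarly on the right). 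So the substance is the case $q\in Q_\alg\setmeno\erre$.

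Next I would write such a $q$ as $q=r+s\j$ with $r,s\in\erre$, $s\neq0$, and $\j\in\su_\alg$, so that $q\in\ci_\j$. The key structural point is that $\ci_\j$ is a commutative subalgebra of $\alg$ isomorphic to $\CC$ via $r+s\j\mapsto r+s\ui$, and under this isomorphism the $^*$-involution corresponds to complex conjugation (since $\j^c=-\j$ and $r^c=r$). Consequently $qq^c=r^2+s^2>0$, which confirms $q\in Q_\alg$ has $q^c$ as a nonzero scalar multiple of its inverse: explicitly $q^{-1}=q^c/(qq^c)=q^c/|q|^2$ exists in $\ci_\j\subseteq\alg$, and by \eqref{eq:assumption-2bis} we have $|q^{-1}|=|q^c|/|q|^2$. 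Here I must check $|q^c|=|q|$ for $q\in Q_\alg$: since $|q^c|^2=q^c(q^c)^c=q^cq=qq^c=|q|^2$ using commutativity in $\ci_\j$, this holds. Therefore $|q|\,|q^{-1}|=|q|\cdot|q|/|q|^2=1$.

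With $q$ invertible in $\alg$ and $|q|\,|q^{-1}|=1$, the proof concludes exactly as in the real case: from \eqref{eq:homog}, $\norma{qx}{}\le|q|\norma{x}{}$ and also $\norma{x}{}=\norma{q^{-1}(qx)}{}\le|q^{-1}|\,\norma{qx}{}$, so $\norma{qx}{}\ge\norma{x}{}/|q^{-1}|=|q|\norma{x}{}$, giving $\norma{qx}{}=|q|\norma{x}{}$; the identity $\norma{xq}{}=|q|\norma{x}{}$ follows symmetrically using $xq\cdot q^{-1}=x$ together with the right-hand inequalities in \eqref{eq:homog}.

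I do not expect any serious obstacle here; the only mildly delicate points are making sure the element $q^c/|q|^2$ really is a two-sided inverse of $q$ in $\alg$ (which is automatic because $q$ and $q^c$ commute inside the commutative subalgebra $\ci_\j$) and verifying $|q^c|=|q|$ on the quadratic cone (which uses $|q|^2=qq^c$ from \eqref{eq:assumption-2bis} and the identity $q^c q=qq^c$ valid on $Q_\alg$). Everything else is a routine application of the submultiplicativity of $|\cdot|$ and the homogeneity axiom \eqref{eq:homog}.
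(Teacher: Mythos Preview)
Your argument is correct: for $q\in Q_\alg$ with $q\neq 0$ you produce a two-sided inverse $q^{-1}=q^c/|q|^2\in\ci_\j\subseteq Q_\alg$ satisfying $|q|\,|q^{-1}|=1$, and then the sandwich $\norma{qx}{}\le|q|\norma{x}{}=|q|\norma{q^{-1}(qx)}{}\le|q|\,|q^{-1}|\,\norma{qx}{}=\norma{qx}{}$ forces equality (and symmetrically on the right). The verifications that $q^c q=qq^c$ and $|q^c|=|q|$ on $Q_\alg$ are handled correctly via the commutativity of $\ci_\j$ and the identity $|p|^2=pp^c$ from \eqref{eq:assumption-2bis}.

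There is nothing to compare against in this paper: the authors do not prove the lemma here but simply recall it from \cite[Lemma~3.3]{GhiRec18}. Your proof is exactly the natural one and is presumably what appears in that reference.
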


\begin{Def}
Assume \eqref{assumption on A} holds and that $X$ is a two-sided $\alg$-module. Let $D(\A)$ be a right $\alg$-submodule of $X$. We say that $\A : D(\A) \funzione X$ is \emph{right linear} if it is additive and
\begin{equation*}
 \A(xq) = \A(x) q \qquad \forall x \in D(\A), \quad \forall q \in \alg.
\end{equation*}
As usual, the notation $\A x$ is often used in place of $\A(x)$. We use the symbol $\End^{\r}(X)$ to denote the set of right linear operators $\A$ with $D(\A) = X$. The identity operator is right linear and is denoted by $\Id_X$ or simply by $\Id$. Moreover, if $X$ is a normed two-sided $\alg$-module, then we say that 
$\A : D(\A) \funzione X$ is \emph{closed} if its graph is closed in $X \times X$. As in the classical theory, we set 
$D(\A^n) := \{x \in D(\A^{n-1}) \, : \, \A^{n-1} x \in D(\A)\}$ for every $n \in \enne \setmeno \{0\}$, where $\A^0:=\Id$. 
\end{Def}

Let us also recall the following definition (see, e.g., \cite[Chapter 1, p. 55-57]{AndFul74}).

\begin{Def}
Let $D(\A)$ be a right $\alg$-submodule of $X$ and let $q \in \alg$. If $\A : D(\A) \funzione X$ is a right linear operator, then we define the mapping $q \A : D(\A) \funzione X$ by setting
\begin{equation}
   (q \A)(x) := q \A(x), \qquad 
   x \in D(\A).  \label{operatore per scalare a sx}
\end{equation}  
If $D(\A)$ is also a left $\alg$-submodule of $X$, then we can define $\A q :D(\A) \funzione X$ by setting
\begin{equation}
(\A q)(x):=\A(q x), \qquad 
x \in D(\A). \label{operatore per scalare a dx}
\end{equation} 
The sum of operators is defined in the usual way.
\end{Def}
It is easy to see that the operators defined in \eqref{operatore per scalare a sx} and \eqref{operatore per scalare a dx} are right linear.

\begin{Def}
Assume $X$ is normed with $\alg$-norm $\norma{\cdot}{}$. For every $\B \in \End^\r(X)$, we set
\begin{equation}\label{norma operatoriale}
  \norma{\B}{} := \sup_{x \neq 0} \frac{\norma{\B x}{}}{\norma{x}{}}
\end{equation}
and we define the set
$\Lin^\r(X) := \{\B \in \End^\r(X)\ :\ \norma{\B}{} < \infty\}.$
\end{Def}

\section{Main results in their general form}

In order to state our main result in its general form we recall the noncommutative spectral notions given for the first time in \cite{CoGeSaSt07} for quaternions and in \cite{ColSabStr08} for arbitrary Clifford algebras $\erre_n$.
Here we consider the general case introduced in \cite[Definition 2.26]{GhiRec16}.
We will assume that
\[
\text{\emph{$\alg$ satisfies \eqref{assumption on A}, \eqref{S_A nonvuota} and \eqref{eq:assumption-2bis}, and $X$ is a Banach two-sided $\alg$-module.}}
\]

\begin{Def}\label{D:spherical spectral notions}
Let $D(\A)$ be a right $\alg$-submodule of $X$ and let $\A : D(\A) \funzione X$ be a closed right linear operator. \begin{itemize}
\item[(i)]
Given $q \in Q_\alg$, the right linear operator $\Delta_q(\A) : D(\A^2) \funzione X$ is defined by
\begin{equation*}
  \Delta_q(\A) := \A^2 - 2\re (q) \,\A + |q|^2 \Id, \qquad q \in Q_\alg.
\end{equation*}
\item[(ii)]
The \emph{spherical resolvent set $\rho_\s(\A)$ of $\A$} is defined by
\begin{equation*}
\rho_\s(\A) := \{q \in Q_\alg \, : \, \text{$\Delta_q(\A)$ is bijective, $\Delta_q(\A)^{-1} \in \Lin^\r(X)$}\}
\end{equation*}
and the \emph{spherical spectrum $\sigma_\s(\A)$ of $\A$} by $\sigma_\s(\A) :=Q_\alg \setmeno \rho_\s(\A)$.
\item[(iii)]
Given $q \in \rho_\s(\A)$, the \emph{spherical quasi-resolvent operator} of $\A$ at $q$ 
is the operator $\Q_q(\A) : X \funzione X$ defined by
\begin{equation*}
  \Q_q(\A) := \Delta_q(\A)^{-1}, \qquad q \in \rho_\s(\A).
\end{equation*}
\item[(iv)]
Given $q \in \rho_\s(\A)$, the \emph{spherical resolvent of $\A$ at $q$} is the operator $\C_q(\A) : X \funzione X$ defined by
\begin{equation}\label{resolvent operator}
\C_q(\A) := \Q_q(\A) q^c - \A \Q_q(\A), \qquad q \in \rho_\s(\A). \notag
\end{equation}
\end{itemize}
\end{Def}

Let us observe that 
\begin{equation}\label{resolvent bounded}
  \Q_q(\A) \in \Lin^\r(X), \qquad \C_q(\A) \in \Lin^\r(X) \qquad \forall q \in \rho_\s(\A).
\end{equation}
Indeed, by definition, $\Q_q(\A)$ is bounded and if we endow $(X,+)$ with the (left) real scalar multiplication $\erre \times X \funzione X : (r,x) \longmapsto rx = xr$, then thanks to \eqref{norma moltiplicativa} $X$ can be considered as a real Banach space and $\A$ is a closed $\erre$-linear operator on it, thus the closed graph theorem implies that 
$\A\Q_q(\A)$ is continuous and consequently $\C_q(\A)$ is also continuous. Since all these operators are also 
$\alg$-right linear we infer \eqref{resolvent bounded}.

We mention that a definition that has some similarities with the spherical spectrum was given in \cite{Kap} in the context of real $^*$-algebras.

We now recall the natural definition of right linear operator semigroup (cf. \cite{ColSab11} for the quaternionic case and  \cite{GhiRec16} for the general case).

\begin{Def}
A mapping $\S : \clsxint{0,\infty} \funzione \Lin^\r(\X)$ is called \emph{strongly continuous} if the 
$t \longmapsto \S(t)x$ is continuous from $\clsxint{0,\infty}$ into $X$ for every $x \in X$.
\end{Def}

\begin{Def}\label{def semigroup}
A mapping $\T : \clsxint{0,\infty} \funzione \Lin^\r(X)$ is called \emph{right linear strongly conti\-nuous (operator) semigroup} if $\T$ is strongly continuous and if 
\begin{align*}
  & \T(t+s) = \T(t)\T(s) \qquad \forall t, s > 0, \\
  & \T(0) = \Id.
\end{align*}
The \emph{generator of $\T$} is the right linear operator $\A : D(\A) \funzione X$ defined by
\begin{align*}
  & D(\A) := \left\{x \in X\ :\ \exists \lim_{h \to 0} \frac{1}{h}(\T(h)x - x) \in X\right\}, \\
  & \A x := \lim_{h \to 0}\frac{1}{h}(\T(h)x - x), \qquad x \in D(\A).
\end{align*}
\end{Def}

For the classical theory of semigroups in the complex framework we refer, e.g.,  to 
\cite{HilPhi57,Dav80,Paz83,Gol85,Lun95,Tai95,EngNag00}.

The next result includes Theorem \ref{thm:supermain-clifford}. 

\begin{Thm}\label{thm:supermain}
Let $m\in\enne$, let $P(\x)=\sum_{k=0}^{m+2}\x^ka_k\in\RR[\x]$ such that $a_{m+2}\neq0$ and let $g_P\in C^\infty(\clsxint{0,\infty};\RR)$ be the unique solution of \eqref{ODE}. Let $\T:\clsxint{0,\infty} \funzione \Lin^r(X)$ be a strongly continuous right linear semigroup, let 
$\A:D(\A) \funzione X$ be its generator and let $\omega\in\erre$ be a real constant such that $M:=\sup_{t\in\clsxint{0,\infty}}\norma{\T(t)}{}e^{-\omega t}<\infty$. Then, if $r_P>\omega$, the operator $P(\A) = \sum_{k=0}^{m+2}\A^ka_k$ is bijective, $P(\A)^{-1}\in\Lin^r(X)$ and it holds:
\begin{itemize}
 \item[$(\mr{a})$] $P(\A)^{-1}=\Lap(g_P)$, that is,
 \begin{equation}\label{inverso P = integrale}
 P(\A)^{-1}x=\int_0^\infty\T(t)g_P(t)x \de t \qquad \forall x\in X.
 \end{equation}
 \item[$(\mr{b})$] Given $(p_0,p_1,\ldots,p_{m+1})\in\alg^{m+2}$, we have
 \begin{equation}\label{potA P-1}
 \sum_{j=0}^{m+1}\A^kP(\A)^{-1}p_jx=\Lap\left(\sum_{j=0}^{m+1}g_P^{(j)}(-1)^jp_j\right)x \qquad \forall x\in X.
 \end{equation}
\end{itemize}
Moreover,
 \begin{equation}\label{estimate for P-1}
 \norma{P(\A)^{-1}}{}  \le M\sum_{j=1}^h\sum_{k=1}^{m_j}\frac{|c_{-k}^{(j)}|}{(r_P-\omega)^k}
 \end{equation}
 where $c_{-k}^{(j)}$ is the residue at $\lambda_j$ of the complex rational function $a_{m+2}(z-\lambda_j)^{k-1}/P(-z)$.
\end{Thm}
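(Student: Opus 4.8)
The plan is to prove directly that the operator $\Lap(g_P)x:=\int_0^\infty\T(t)g_P(t)x\,\de t$ is a well-defined element of $\Lin^r(X)$ which is a two-sided inverse of $P(\A)$; this yields \eqref{inverso P = integrale}, and \eqref{potA P-1} and \eqref{estimate for P-1} then drop out of the computations. First I would note that, by \eqref{g_P} and $r_P>\omega$, there exist $r'\in\opint{\omega,r_P}$ and constants $C_j>0$ with $|g_P^{(j)}(t)|\le C_je^{-r't}$ for all $t\ge0$; hence, for every continuous $\psi:\clsxint{0,\infty}\funzione\alg$ with $|\psi(t)|\le Ce^{-r't}$ for some $r'>\omega$, the integral $\Lap(\psi)x:=\int_0^\infty\T(t)\psi(t)x\,\de t$ converges absolutely, $\Lap(\psi)\in\Lin^r(X)$ (right linearity because right scalar multiplication is a bounded $\erre$-linear map, hence commutes with $\T(t)$ and with the integral), and $\|\Lap(\psi)\|\le M\int_0^\infty|\psi(t)|e^{\omega t}\,\de t$. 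In particular $\Lap(g_P),\Lap(g_P'),\dots,\Lap(g_P^{(m+2)})$ and $\Lap\bigl(\sum_{j=0}^{m+1}g_P^{(j)}(-1)^jp_j\bigr)$ all make sense.

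The core of the proof will be the differentiation rule: if $\phi\in C^1(\clsxint{0,\infty};\alg)$ is such that $\phi$ and $\phi'$ decay like $e^{-r't}$ for some $r'>\omega$, then $\Lap(\phi)x\in D(\A)$ and $\A\Lap(\phi)x=-\phi(0)x-\Lap(\phi')x$ for every $x\in X$. To prove it I would write, after the change of variable $t\mapsto t+h$,
\[
\tfrac1h(\T(h)\Lap(\phi)x-\Lap(\phi)x)=\tfrac1h\int_h^\infty\T(t)(\phi(t-h)-\phi(t))x\,\de t-\tfrac1h\int_0^h\T(t)\phi(t)x\,\de t,
\]
and let $h\to0^+$: the second term tends to $\phi(0)x$ by strong continuity, while in the first the bound $|\phi(t-h)-\phi(t)|\le h\sup_{[t-h,t]}|\phi'|$ together with $\|\T(t)\|\le Me^{\omega t}$ permits passing to the limit by dominated convergence, giving $-\Lap(\phi')x$. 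Applying this rule successively to $g_P,g_P',g_P'',\dots$ and invoking the initial conditions in \eqref{ODE} (namely $g_P^{(j)}(0)=0$ for $0\le j\le m$ and $g_P^{(m+1)}(0)=(-1)^m(a_{m+2})^{-1}$), an induction will give, for every $x\in X$, that $\Lap(g_P)x\in D(\A^{m+2})$,
\[
\A^j\Lap(g_P)x=(-1)^j\Lap(g_P^{(j)})x\qquad(0\le j\le m+1),
\]
and $\A^{m+2}\Lap(g_P)x=(-1)^{m+2}\Lap(g_P^{(m+2)})x+(a_{m+2})^{-1}x$.

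The rest is short. Forming $\sum_{k=0}^{m+2}a_k\A^k\Lap(g_P)x$ with these identities and with $\sum_{k=0}^{m+2}(-1)^ka_kg_P^{(k)}\equiv0$ (the first line of \eqref{ODE} applied to $g_P$) gives $P(\A)\Lap(g_P)x=\Lap\bigl(\sum_{k=0}^{m+2}(-1)^ka_kg_P^{(k)}\bigr)x+x=x$, so $P(\A)$ is surjective with right inverse $\Lap(g_P)$. For injectivity I would argue that if $x\in D(\A^{m+2})$ and $P(\A)x=0$, then $u(t):=\T(t)x$ satisfies $\sum_{k=0}^{m+2}a_ku^{(k)}(t)=\T(t)P(\A)x=0$ on $\clsxint{0,\infty}$ (using $u^{(k)}(t)=\T(t)\A^kx$, as in the classical semigroup calculus) together with $\|u(t)\|\le Me^{\omega t}\|x\|$; but every solution of this constant-coefficient linear equation has the form $u(t)=\sum_{j=1}^he^{\lambda_jt}p_j(t)$ with $X$-valued polynomials $p_j$ of degree $<m_j$, and $\Re(\lambda_j)\ge r_P>\omega$ forces each $p_j\equiv0$, whence $x=u(0)=0$. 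Thus $P(\A)$ is bijective with $P(\A)^{-1}=\Lap(g_P)\in\Lin^r(X)$, which is \eqref{inverso P = integrale}; \eqref{potA P-1} then follows by applying $\A^j\Lap(g_P)(p_jx)=(-1)^j\Lap(g_P^{(j)})(p_jx)$ for $0\le j\le m+1$ and summing over $j$. Finally, \eqref{estimate for P-1} will come from $\|P(\A)^{-1}x\|\le M\|x\|\int_0^\infty|g_P(t)|e^{\omega t}\,\de t$ by expanding $g_P$ through the partial fraction decomposition of $\widehat{g_P}(z)=1/P(-z)$ (the identity obtained by Laplace-transforming \eqref{ODE}), integrating termwise via $\int_0^\infty t^{k-1}e^{-(\Re(\lambda_j)-\omega)t}\,\de t=(k-1)!/(\Re(\lambda_j)-\omega)^k\le(k-1)!/(r_P-\omega)^k$, and identifying the coefficients that appear with the residues $c_{-k}^{(j)}$.

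The part I expect to be the main obstacle is the induction in the second paragraph: making the difference-quotient limit rigorous (dominated convergence for an $\alg$-valued integrand, bearing in mind that $\T$ lives only on $\clsxint{0,\infty}$ and that $D(\A)$-membership follows once the limit exists), and tracking, through the $(m+2)$ applications of the rule, exactly which $t=0$ boundary values survive, so that precisely the single term $(a_{m+2})^{-1}x$ is produced --- which is the point at which the normalization $g_P^{(m+1)}(0)=(-1)^m(a_{m+2})^{-1}$ in \eqref{ODE} is used.
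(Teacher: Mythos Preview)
Your proof follows the paper's line almost exactly: the differentiation rule $\A\Lap(\phi)=-\phi(0)\Id-\Lap(\phi')$ and its iterates are precisely Lemma~\ref{lem:L}(a),(b), and the computation of $P(\A)\Lap(g_P)=\Id$, the derivation of part~(b), and the norm estimate are carried out identically. The one genuine difference is how you show $\Lap(g_P)$ is also a \emph{left} inverse of $P(\A)$. The paper observes that, since $g_P$ is real-valued, $\Lap(g_P)$ commutes with each $\A^k$ on $D(\A^k)$ (Lemma~\ref{lem:L}(d),(e)), whence $\Lap(g_P)P(\A)=P(\A)\Lap(g_P)=\Id$ on $D(\A^{m+2})$; this is a two-line argument that stays entirely within the operator calculus. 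Your ODE-plus-growth route for injectivity is valid but needs more care than you indicate: writing the general $X$-valued solution as $\sum_j e^{\lambda_j t}p_j(t)$ tacitly requires complexifying $X$ when the $\lambda_j$ are nonreal, and the step ``$\Re(\lambda_j)\ge r_P>\omega$ forces $p_j\equiv0$'' in a Banach space is most cleanly justified by composing with bounded $\erre$-linear functionals and reducing to the scalar case (or by a Laplace-transform pole argument). The paper's commutation trick is shorter and avoids this detour; your argument, on the other hand, makes transparent \emph{why} the spectral gap $r_P>\omega$ is needed for injectivity and not merely for convergence of the integral.
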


Furthermore we have

\begin{Thm}\label{thm:main}
The statement of Theorem \ref{thm:main-clifford} holds true replacing $\RR_n$ with $\alg$.
\end{Thm}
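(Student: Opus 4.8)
The plan is to prove the statement of Theorem~\ref{thm:main-clifford} with $\RR_n$ replaced by $\alg$ by specializing Theorem~\ref{thm:supermain} to the quadratic polynomial $P(\x)=\Delta_q(\x)=\x^2-2\re(q)\x+|q|^2$. The preliminary observation is that this $P$ really has \emph{real} coefficients: since $q\in Q_\alg$ we may write $q=a+\beta\j$ with $a,\beta\in\erre$ and $\j\in\su_\alg$, whence $q^c=a-\beta\j$, $\re(q)=a\in\erre$, $|q|^2=qq^c=a^2+\beta^2\in\erre$, and $b:=|\im(q)|=|\beta\j|=|\beta|$ (using $\beta\j\in Q_\alg$ together with $|\beta\j|^2=(\beta\j)(\beta\j)^c=-\beta^2\j^2=\beta^2$, by \eqref{eq:assumption-2bis}). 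Thus $P(\x)=\x^2-2a\x+(a^2+b^2)\in\RR[\x]$ has degree~$2$ (so $m=0$, $a_2=1$, $a_1=-2a$, $a_0=a^2+b^2$), its complex roots are $\lambda_{1,2}=a\pm b\ui$, and therefore $r_P=\min\{\Re(\lambda_1),\Re(\lambda_2)\}=a=\re(q)$; hence the hypothesis $\re(q)>\omega$ is exactly $r_P>\omega$. Theorem~\ref{thm:supermain} then gives at once that $\Delta_q(\A)=P(\A)$ is bijective with $\Delta_q(\A)^{-1}\in\Lin^r(X)$, i.e.\ $q\in\rho_\s(\A)$ in the sense of Definition~\ref{D:spherical spectral notions}, and $\Q_q(\A)=P(\A)^{-1}$.

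Next I would identify the data $g_P,g_P'$ attached to this $P$ with the concrete functions in the statement. For $P=\Delta_q$ the ODE~\eqref{ODE} reads $g''+2ag'+(a^2+b^2)g=0$ on $\clsxint{0,\infty}$ with $g(0)=0$ and $g'(0)=1$, whose unique solution is $g(t)=e^{-at}\sin(bt)/b=te^{-at}\sinc(tb)$ (with the limiting value $te^{-at}$ when $b=0$); thus $g_P=g_q$, the function of \eqref{g_q-clifford}, and moreover $g_P'(t)=g_q'(t)=e^{-at}\bigl(\cos(bt)-(a/b)\sin(bt)\bigr)=e^{-at}\bigl(\cos(tb)-at\sinc(tb)\bigr)$. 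Then \eqref{inverso P = integrale} yields \eqref{eq:Q_S} directly. For \eqref{eq:AQ_S} I would apply part~(b) of Theorem~\ref{thm:supermain} with $(p_0,p_1)=(0,1)$, obtaining $\A\Q_q(\A)x=\Lap(-g_q')x=-\int_0^\infty\T(t)g_q'(t)x\de t$. For \eqref{eq:C_S} I would apply part~(b) with $(p_0,p_1)=(q^c,-1)$: the left-hand side of~\eqref{potA P-1} then equals $\Q_q(\A)q^c-\A\Q_q(\A)=\C_q(\A)$, while the right-hand side is $\int_0^\infty\T(t)\bigl(g_q(t)q^c+g_q'(t)\bigr)x\de t$, so it only remains to verify the scalar identity $g_q(t)q^c+g_q'(t)=e^{-tq}$ inside $\ci_\j$. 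This is elementary: $\ci_\j$ is a commutative subalgebra of $\alg$ which is $^*$-isomorphic to $\ci$ via $\j\mapsto\ui$ (so the exponential series converges there by submultiplicativity and stays in $\ci_\j$), whence $e^{-tq}=e^{-ta}\bigl(\cos(t\beta)-\j\sin(t\beta)\bigr)$; substituting $q^c=a-\beta\j$, $\cos(t\beta)=\cos(tb)$ and $\sin(t\beta)=(\beta/b)\sin(tb)$ makes the two sides coincide (and the case $b=0$ is immediate).

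Finally, for the norm bounds I would \emph{not} specialize~\eqref{estimate for P-1} (which is not sharp here, since for $\Delta_q$ it produces a factor $1/(b(\re(q)-\omega))$ instead of $1/(\re(q)-\omega)^2$), but estimate the integral representations directly. From~\eqref{eq:Q_S} and $|\sinc(r)|\le1$,
\[
\norma{\Q_q(\A)}{}\le\int_0^\infty\norma{\T(t)}{}\,|g_q(t)|\de t\le M\int_0^\infty t\,e^{-(\re(q)-\omega)t}\de t=\frac{M}{(\re(q)-\omega)^2},
\]
which is \eqref{eq:estimateQ}; from~\eqref{eq:C_S}, noting that $e^{-tq}\in\ci_\j\subseteq Q_\alg$ and hence $|e^{-tq}|^2=e^{-tq}(e^{-tq})^c=e^{-t(q+q^c)}=e^{-2t\re(q)}$, we get $\norma{\C_q(\A)}{}\le M\int_0^\infty e^{-(\re(q)-\omega)t}\de t=M/(\re(q)-\omega)$, which is~\eqref{eq:estimateC}. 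All the integrals converge precisely because $\re(q)=r_P>\omega$.

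The argument is essentially bookkeeping once Theorem~\ref{thm:supermain} is in hand. The two points that deserve care are: (i) checking that $\Delta_q(\x)\in\RR[\x]$ and that $r_P=\re(q)$, which is exactly where the assumption $q\in Q_\alg$ enters; and (ii) the scalar identity $g_q(t)q^c+g_q'(t)=e^{-tq}$, which relies on $\ci_\j$ being a commutative copy of $\ci$ inside $\alg$. I expect (ii), together with keeping the left/right-module conventions in~\eqref{potA P-1} straight, to be the only genuinely fiddly part.
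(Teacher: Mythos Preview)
Your proposal is correct and follows essentially the same approach as the paper's proof: specialize Theorem~\ref{thm:supermain} to $P=\Delta_q$, identify $g_P$ with $g_q$ (the paper defers this to Lemma~\ref{lem:C}), apply part~(b) with the same choices of $(p_0,p_1)$ for \eqref{eq:AQ_S} and \eqref{eq:C_S}, and estimate the integrals directly for \eqref{eq:estimateQ}--\eqref{eq:estimateC}. Your explicit verification of the scalar identity $g_q(t)q^c+g_q'(t)=e^{-tq}$ and your remark that the general bound~\eqref{estimate for P-1} is not sharp here both fill in details the paper leaves implicit.
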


%%%

\section{Proofs}\label{S:proofs}

Let us start with a lemma on strongly continuous mapping. The symbol $C(\clsxint{0,\infty};\alg)$ denotes the space of continuous functions from $\clsxint{0,\infty}$ to $\alg$, both endowed with the topo\-logy induced by the Euclidean distance.

\begin{Lem}\label{T(.)g(.)x continuous}
If $\T : \clsxint{0,\infty} \funzione \Lin^r(X)$ is a strongly continuous and $g \in C(\clsxint{0,\infty};\alg)$, then the following statements hold true.
\begin{itemize}
\item[(a)]
The function $t \longmapsto \norma{\T(t)}{}|g(t)|$ is Lebesgue measurable on $\clsxint{0,\infty}$.
\item[(b)]
For every $x \in X$ the function $t \longmapsto \T(t)g(t)x$ is continuous from $\clsxint{0,\infty}$ into $X$.
\end{itemize}
\end{Lem}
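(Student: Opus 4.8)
The plan is to prove (a) and (b) separately, using throughout that $X$, equipped with the norm $\norma{\cdot}{}$ and its real scalar multiplication, is an ordinary real Banach space: indeed $(X,d)$ is complete by hypothesis, and by Lemma \ref{L:norma moltiplicativa} one has $\norma{rx}{}=|r|\,\norma{x}{}$ for every $r\in\erre$, so $\norma{\cdot}{}$ is a genuine norm over $\erre$. Moreover every $\B\in\Lin^r(X)$ is then a bounded $\erre$-linear operator on this real Banach space (for $r\in\erre$ one has $\B(rx)=\B(xr)=\B(x)r=r\B(x)$), and its $\erre$-operator norm coincides with the quantity $\norma{\B}{}$ defined in \eqref{norma operatoriale}.

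For part (a), I would write $\norma{\T(t)}{}=\sup_{\norma{x}{}\le1}\norma{\T(t)x}{}$. By strong continuity of $\T$, for each fixed $x$ the function $t\longmapsto\norma{\T(t)x}{}$ is continuous on $\clsxint{0,\infty}$, hence lower semicontinuous, and a pointwise supremum of lower semicontinuous functions is lower semicontinuous: for every $c\in\erre$ the set $\{t\in\clsxint{0,\infty}:\norma{\T(t)}{}>c\}=\bigcup_{\norma{x}{}\le1}\{t:\norma{\T(t)x}{}>c\}$ is open. Hence $t\longmapsto\norma{\T(t)}{}$ is Borel measurable, and multiplying by the continuous function $t\longmapsto|g(t)|$ shows that $t\longmapsto\norma{\T(t)}{}|g(t)|$ is Lebesgue measurable. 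Note that this argument requires no separability of $X$.

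For part (b), the preliminary step is the local boundedness of $t\longmapsto\norma{\T(t)}{}$. Fix a compact interval $K\subseteq\clsxint{0,\infty}$: for each $x\in X$ the set $\{\T(t)x:t\in K\}$ is the continuous image of $K$, hence bounded, so $\{\T(t):t\in K\}$ is a pointwise bounded family of bounded $\erre$-linear operators on the real Banach space $X$, and the uniform boundedness principle yields $M_K:=\sup_{t\in K}\norma{\T(t)}{}<\infty$. Now fix $x\in X$ and $t_0\in\clsxint{0,\infty}$ and choose a compact neighbourhood $K$ of $t_0$; for $t\in K$ one has
\[
\norma{\T(t)g(t)x-\T(t_0)g(t_0)x}{}\le\norma{\T(t)\bigl((g(t)-g(t_0))x\bigr)}{}+\norma{\T(t)(g(t_0)x)-\T(t_0)(g(t_0)x)}{}.
\]
The first summand is at most $M_K\,|g(t)-g(t_0)|\,\norma{x}{}$ by \eqref{eq:homog}, hence tends to $0$ as $t\to t_0$ by continuity of $g$, while the second tends to $0$ by strong continuity of $\T$ applied to the fixed vector $g(t_0)x$. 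Thus $t\longmapsto\T(t)g(t)x$ is continuous at each $t_0$, which is (b).

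I expect the only genuinely delicate point to be the measurability in (a): without a separability assumption on $X$ the supremum defining $\norma{\T(t)}{}$ cannot be reduced to a countable one, so one argues instead via lower semicontinuity of the operator-norm function; in (b) everything is routine once the uniform boundedness principle supplies the local bound $M_K$.
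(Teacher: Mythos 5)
Your proposal is correct and follows essentially the same route as the paper: lower semicontinuity of $t\longmapsto\norma{\T(t)}{}$ for (a), and uniform boundedness plus the triangle inequality split $\norma{\T(t)g(t)x-\T(t)g(t_0)x}{}+\norma{\T(t)g(t_0)x-\T(t_0)g(t_0)x}{}$ for (b). The only (minor) divergence is in how lower semicontinuity is justified: the paper cites the Banach--Steinhaus theorem to assert $\norma{\T(t)}{}\le\liminf_{\tau\to t}\norma{\T(\tau)}{}$, whereas you derive it directly as a pointwise supremum of the continuous functions $t\longmapsto\norma{\T(t)x}{}$ over $\norma{x}{}\le1$. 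Your justification is actually the more precise one here, since lower semicontinuity of the operator norm in the strong operator topology needs only strong continuity and the sup definition of the norm, not Banach--Steinhaus; the latter is genuinely needed only for the local bound $M_K$ in part (b), which you also supply.
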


\begin{proof}
Since $\T:\clsxint{0,\infty} \funzione \Lin^r(X)$ is strongly continuous, by the Banach-Steinhaus theorem it follows that $\norma{\T(t)}{} \le \liminf_{\tau \to t} \norma{\T(\tau)}{}$ for every $t \ge 0$, i.e.
$t \longmapsto \norma{\T(t)}{}$ is lower semicontinuous, and hence it is Lebesgue measurable.
Thus (a) is proved. In order to prove (b) fix an arbitrary $t_0 \ge 0$. Since $\T$ is strongly continuous, by the uniform boundedness principle there exists $C > 0$ such that for every $t \ge 0$ with $|t-t_0| < 1$ we have $\norma{T(t)}{} \le C$ and 
\begin{align*}
  \norma{\T(t)g(t)x - \T(t_0)g(t_0)x}{} 
    & \le \norma{\T(t)g(t)x - \T(t)g(t_0)x}{} + \norma{\T(t)g(t_0)x - \T(t_0)g(t_0)x}{}  \\
    & \le C|g(t) - g(t_0)|\norma{x}{} + \norma{\T(t)g(t_0)x - \T(t_0)g(t_0)x}{}.
\end{align*}
Thus the continuity of $t \longmapsto \T(t)g(t)x$ at $t_0$ follows from the continuity of $g$ and from the strong continuity of $\T$.
\end{proof}

If $\T : \clsxint{0,\infty} \funzione \Lin^r(X)$ is a strongly continuous right linear semigroup, $t \ge 0$,
$g$ $\in$ $C(\clsxint{0,\infty};\alg)$, and $x \in X$, then Lemma \ref{T(.)g(.)x continuous} and estimate 
$\norma{\T(t)g(t)x}{} \le \norma{\T(t)}{}|g(t)|\norma{x}{}$ allow to give the following definition.

\begin{Def}
Let $\T:\clsxint{0,\infty} \funzione \Lin^r(X)$ be a strongly continuous right linear semigroup. We denote by 
$L_\T(\clsxint{0,\infty};\alg)$ the real vector space of all continuous functions $g : \clsxint{0,\infty} \funzione \alg$ such that the function $t \longmapsto \norma{\T(t)}{}|g(t)|$ belongs to $L^1(\clsxint{0,\infty};\erre)$. For every 
$g \in L_\T(\clsxint{0,\infty};\alg)$ we define the operator $\Lap(g) : X \funzione X$ by setting
\begin{equation}\label{def Lap}
  \Lap(g)x:=\int_0^\infty\T(t)g(t)x\de t, \qquad x \in X.
\end{equation}
\end{Def}

Notice that the assumptions implies that the integral in \eqref{def Lap} is a convergent Lebesgue integral
for functions with values in the Banach space $(X,+)$ endowed with the real scalar multiplication $\erre \times X \funzione X : (r,x) \longmapsto rx = xr$ (thanks to \eqref{norma moltiplicativa} $\norma{\cdot}{}$
is a norm on this real vector space). The symbol $L^1(J;X)$ denotes the space of Lebesgue integrable functions from an interval 
$J \subseteq \erre$ into this real Banach space.

In the remainder of the paper, for $g \in C(\clsxint{0,\infty};\alg)$, the symbols $g'$ and $g''$ will denote
the first and second derivative of $g$, respectively.

\begin{Lem}
If $\T:\clsxint{0,\infty} \funzione \Lin^r(X)$ is a strongly continuous right linear semigroup, then the following statements hold true.
\begin{itemize}
\item[(a)]
If $g \in L_\T(\clsxint{0,\infty};\alg)$ then $t \longmapsto \T(t)g(t-h)x$ belongs to $L^1(\clsxint{h,\infty};X)$ for every $h > 0$ and for every $x \in X$.
\item[(b)]
If $g, g' \in L_\T(\clsxint{0,\infty};\alg)$ then 
\[
  \lim_{h \to 0} \left\|\int_h^\infty \T(t)\frac{g(t-h) - g(t)}{h}x \de t + \Lap(g')x \right\| = 0
  \qquad \forall x \in X.
\]
\end{itemize}
\end{Lem}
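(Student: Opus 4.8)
The plan is to establish both parts by reducing to properties of the scalar Laplace integral $\Lap$ and the one-parameter family of translates of the semigroup. For part (a), fix $h>0$ and $x\in X$. The map $t\longmapsto \T(t)g(t-h)x$ on $\clsxint{h,\infty}$ is continuous by an argument parallel to Lemma \ref{T(.)g(.)x continuous}(b): it is the composition of the strongly continuous $\T$ with the continuous shifted $g(\cdot-h)$, and continuity at each point $t_0\ge h$ follows from local boundedness of $\norma{\T(\cdot)}{}$ (uniform boundedness principle) together with continuity of $g$. For integrability, I would use the semigroup law $\T(t)=\T(t-h)\T(h)$ for $t\ge h$, so that $\norma{\T(t)g(t-h)x}{}\le\norma{\T(t-h)}{}\,|g(t-h)|\,\norma{\T(h)x}{}$; substituting $s=t-h$ turns $\int_h^\infty\norma{\T(t-h)}{}\,|g(t-h)|\de t$ into $\int_0^\infty\norma{\T(s)}{}\,|g(s)|\de s<\infty$, since $g\in L_\T(\clsxint{0,\infty};\alg)$. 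Hence $t\longmapsto\T(t)g(t-h)x$ lies in $L^1(\clsxint{h,\infty};X)$.

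For part (b), the natural approach is to rewrite the difference quotient as a telescoping identity. Again using $\T(t)=\T(t-h)\T(h)$ on $\clsxint{h,\infty}$ and the change of variables $s=t-h$, one gets
\[
\int_h^\infty\T(t)\,\frac{g(t-h)-g(t)}{h}\,x\de t
=\int_0^\infty\T(s)\T(h)\,\frac{g(s)-g(s+h)}{h}\,x\de s.
\]
I would then write $\frac{g(s)-g(s+h)}{h}=-\frac{1}{h}\int_0^h g'(s+r)\de r$ and split the integrand so as to compare with $-\Lap(g')x=-\int_0^\infty\T(s)g'(s)x\de s$. The error is controlled by two terms: one measuring $\norma{(\T(h)-\Id)x}{}\to 0$ as $h\to 0$ (strong continuity of $\T$, applied after inserting $\T(h)$ against $g'(s)$), and one measuring $\frac1h\int_0^h|g'(s+r)-g'(s)|\de r$, which tends to $0$ for each $s$ by continuity of $g'$ and is dominated in $L^1$ via $\norma{\T(s)}{}\bigl(|g'(s)|+\sup_{0\le r\le 1}|g'(s+r)|\bigr)$; here one would note that $\sup_{0\le r\le 1}\norma{\T(s)}{}|g'(s+r)|$ is integrable because $\norma{\T(s)}{}\le M e^{\omega s}$ for a suitable $\omega$ (or, more elementarily, because $\norma{\T(s+r)}{}\ge c\,\norma{\T(s)}{}$ fails in general, so one instead bounds $\norma{\T(s)}{}$ by $\norma{\T(\lfloor s\rfloor)}{}\cdot C$ on unit intervals). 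Dominated convergence then yields the claim.

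The main obstacle is the domination step in part (b): one must produce a single $L^1(\clsxint{0,\infty};\erre)$ majorant, independent of $h$ for $h$ small, for the integrand $\norma{\T(s)}{}\cdot\bigl|\frac1h\int_0^h(g'(s+r)-g'(s))\de r\bigr|\cdot\norma{x}{}$ plus the $(\T(h)-\Id)$ term. Since $g'\in L_\T(\clsxint{0,\infty};\alg)$ only controls $\norma{\T(s)}{}|g'(s)|$ at the point $s$, not at $s+r$, the cleanest route is to invoke the standing exponential bound $\norma{\T(t)}{}\le M e^{\omega t}$ to write $\norma{\T(s)}{}\le M'e^{\omega' s}$ on the relevant range and use that $g'$ has the explicit form of a finite sum of (polynomial)$\times e^{-\lambda_j t}$ factors as in \eqref{g_P} when the lemma is applied to $g=g_P$; however, since the lemma is stated for general $g,g'\in L_\T$, one should instead argue abstractly: split $\clsxint{0,\infty}=\bigcup_k\clint{k,k+1}$, bound $\norma{\T(s)}{}\le e^{|\omega|}\norma{\T(k)}{}$ and $\norma{\T(s)}{}\ge e^{-|\omega|}\norma{\T(k+1)}{}$ type inequalities are not available, so one uses only $\norma{\T(s)}{}\le Me^{\omega s}\le Me^{\omega(k+1)}$ together with $|g'(s+r)|\le\sup_{[k,k+2]}|g'|$, and checks that $\sum_k e^{\omega k}\sup_{[k,k+2]}|g'|<\infty$ follows from $\int_0^\infty e^{\omega s}|g'(s)|\,ds\cdot(\text{const})<\infty$ up to a harmless overlap — this is where the only real care is needed. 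Once the majorant is in hand, pointwise convergence plus dominated convergence finishes the proof.
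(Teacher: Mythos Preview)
Part (a) is essentially correct and coincides with the paper's argument, modulo one slip: since $\T(h)$ is only \emph{right} linear you cannot pull $g(t-h)$ through it to get $\norma{\T(t-h)}{}\,|g(t-h)|\,\norma{\T(h)x}{}$. Use instead $\norma{\T(t)g(t-h)x}{}\le\norma{\T(h)}{}\,\norma{\T(t-h)}{}\,|g(t-h)|\,\norma{x}{}$, which is what the paper writes.

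Part (b), however, has a genuine gap at exactly the spot you flag. Your dominated-convergence scheme needs an $h$-independent $L^1$ majorant for $\norma{\T(s)}{}\,\sup_{0\le r\le h}|g'(s+r)|$, and the claimed implication ``$\sum_k e^{\omega k}\sup_{[k,k+2]}|g'|<\infty$ follows from $\int_0^\infty e^{\omega s}|g'(s)|\de s<\infty$'' is false under the bare hypotheses of the lemma. Take $\T(t)=\Id$ (so $\omega=0$ and $L_\T=L^1$) and a continuous $g'\in L^1(\clsxint{0,\infty})$ carrying a triangular bump of height $1$ and base $2^{-k}$ on each interval $\clint{k,k+1}$; then $\sup_{[k,k+2]}|g'|\ge 1$ for every $k$ and your series diverges. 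The hypothesis $g'\in L_\T$ controls $\norma{\T(s)}{}\,|g'(s)|$ only at the point $s$, not after a shift, so no global majorant of the kind you want is available.

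The paper avoids this by a different decomposition. It fixes $\eps>0$, chooses $T$ with $\int_T^\infty\norma{\T(t)}{}|g'(t)|\de t<\eps$, and splits $\clsxint{h,\infty}=\clsxint{h,T}\cup\clsxint{T,\infty}$. On the bounded piece the convergence is elementary. On the tail the crucial identity is
\[
\T(t)\,\frac{g(t-h)-g(t)}{h}\,x=-\int_0^1\T(\xi h)\,\T(t-\xi h)\,g'(t-\xi h)\,x\de\xi,
\]
coming from the factorization $\T(t)=\T(\xi h)\T(t-\xi h)$. The point is that the semigroup and $g'$ are now evaluated at the \emph{same} argument $t-\xi h$; after bounding $\norma{\T(\xi h)}{}\le M$ for $h<1$ and applying Fubini, one lands directly on $\int_{T-\xi h}^\infty\norma{\T(\tau)}{}|g'(\tau)|\de\tau\le\eps$, uniformly in $h$. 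This synchronization of arguments is precisely what your change of variables $s=t-h$ destroys (it leaves $\T(s)$ paired with $g'(s+r)$), and it is what makes the lemma go through with only $g,g'\in L_\T$.
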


\begin{proof}
The claim (a) follows trivially by the estimate 
$\norma{\T(t)g(t-h)x}{} \le \norma{\T(h)}{}\norma{\T(t-h)}{}|g(t-h)|\norma{x}{}$ holding for every $x \in X$, $h > 0$, and $t > h$.
In order to prove (b) fix $x \in X$ and an arbitrary $\eps > 0$, and let $T > 0$ be such that 
$\int_T^\infty \norma{\T(t)}{}|g(t)| \de t < \eps$. Since $\T$ is a strongly continuous semigroup, there exists 
$M \ge 1$ such that $\norma{\T(s)}{} \le M$ whenever $0 \le s \le 1$, therefore for every $h \in \opint{0,1}$ and every $t > h$ we have
\begin{align}%\label{estimate for TD_hg}
  \left\|\T(t)\frac{g(t-h) - g(t)}{h}x\right\| 
    & = \left\|\int_0^1\T(\xi h)\T(t-\xi h)g'(t-\xi h) x \de \xi \right\|  \notag \\
    & \le M \norma{x}{} \int_0^1\norma{\T(t-\xi h)}{}|g'(t-\xi h)|\de \xi  \notag 
\end{align}
hence it follows that
\begin{align}\label{estimate per le code}
   \left\| \int_T^\infty \T(t)\frac{g(t-h) - g(t)}{h} x \de t  \right\|  
   & \le M \norma{x}{} \int_T^\infty \int_0^1  \norma{\T(t-\xi h)}{}|g'(t-\xi h)|\de \xi \de t\notag \\
   & =   M \norma{x}{} \int_0^1 \int_T^\infty  \norma{\T(t-\xi h)}{}|g'(t-\xi h)|\de t \de \xi \notag \\
   & =   M \norma{x}{} \int_0^1 \int_{T-\xi h}^\infty  \norma{\T(\tau)}{}|g'(\tau)|\de \tau \de \xi \notag \\
   & \le M \norma{x}{} \int_0^1 \int_{T}^\infty  \norma{\T(\tau)}{}|g'(\tau)|\de \tau \de \xi \notag \\
   & \le M \norma{x}{} \int_{T}^\infty  \norma{\T(\tau)}{}|g'(\tau)|\de \tau  \le M\norma{x}{}\eps.
\end{align}
Moreover it is easily found a $\delta \in \opint{0,1}$ such that for every $h \in \opint{0,\delta}$ we have 
\begin{align}\label{estimate al finito}
   & \left\| \int_h^T \T(t)\frac{g(t-h) - g(t)}{h} x \de t + \int_0^T \T(t)g'(t) x \de t \right\|  \notag \\
   & \le \left\| \int_h^T \T(t)\left(\frac{g(t-h) - g(t)}{h} + g'(t)\right) x \de t \right\| +
    \left\| \int_0^h \T(t)g'(t) x \de t \right\| \le \eps. 
\end{align}
Hence assertion (b) follows from \eqref{estimate per le code}--\eqref{estimate al finito} and from the following estimate
\begin{align}
  & \left\| \int_h^\infty \T(t)\frac{g(t-h) - g(t)}{h} x \de t + \Lap(g')x  \de t \right\| \notag \\
  & \le \left\| \int_h^T \T(t)\frac{g(t-h) - g(t)}{h} x \de t + \int_0^T \T(t)g'(t) x \de t \right\| \notag \\
  & \sp + \left\| \int_T^\infty \T(t)\frac{g(t-h) - g(t)}{h} x \de t\right\| + \left\|\int_T^\infty \T(t)g'(t) x \de t \right\|. \notag
\end{align}
\end{proof}

The next lemma plays a key role in the proof of Theorem \ref{thm:main}.

\begin{Lem}\label{lem:L}
Let  $\T:\clsxint{0,\infty} \funzione \Lin^r(X)$ be a strongly continuous right linear semigroup and for every 
$g \in L_\T(\clsxint{0,\infty};\alg)$ let $\Lap(g) : X \funzione X$ be defined by \eqref{def Lap}.
Then $\Lap(g) \in \Lin^\r(X)$ for every $g \in L_\T(\clsxint{0,\infty};\alg)$ and the resulting mapping 
$\Lap :  L_\T(\clsxint{0,\infty};\alg) \funzione \Lin^\r(X)$ is $\erre$-linear. Moreover the following assertions hold.
\begin{itemize}
\item[(a)] 
If $g, g'  \in L_\T(\clsxint{0,\infty};\alg)$, then 
  \begin{align}
    &\Lap(g)(X) \subseteq D(\A), \label{eq:LA}\\
    &\label{eq:AL}
    \A\Lap(g)x=-g(0)x-\Lap(g')x \qquad \forall x \in X.
  \end{align}
  
\item[(b)]  
If $m \in \enne$, $g, g', \ldots, g^{(m+2)} \in L_\T(\clsxint{0,\infty};\alg)$, and 
$g(0) = g'(0) = \cdots = g^{(m)}(0) = 0$, then 
  \begin{align}
    &\Lap(g)(X) \subseteq D(\A^{m+2}), \label{eq:LAm}\\
    & \A^{m+2}\Lap(g)x=(-1)^{m+2}\left(g^{(m+1)}(0)x + \Lap(g^{(m+2)})x\right) \qquad \forall x \in X,\label{eq:ALm}\\
    & \A^{k}\Lap(g)x = (-1)^k\Lap(g^{(k)}) \qquad \forall x \in X,\ \forall k \in \{0,\ldots,m+1\}. \label{ALk} 
  \end{align}

\item[(c)] 
If $g, g', g'' \in L_\T(\clsxint{0,\infty};\alg)$, $g(0)=0$, and $g'(0)=1$, then
\begin{align}
&\label{eq:DeltaL}
\Delta_q(\A)\Lap(g)x=x+\Lap(g''+2\re(q)g'+|q|^2g)x \qquad \forall x \in X, \ \forall q \in Q_\alg.
\end{align}

\item[(d)] 
If $g, g' \in L_\T(\clsxint{0,\infty};\alg)$ and $g$ is real-valued, then
\begin{equation}
\label{eq:commutative1}
\A\Lap(g)x=\Lap(g)\A x \qquad \forall x \in D(\A).
\end{equation}

\item[(e)]
If $m \in \enne$, $g, g', \ldots, g^{(m+2)} \in L_\T(\clsxint{0,\infty};\alg)$, 
$g(0) = g'(0) = \cdots = g^{(m)}(0) = 0$, and $g$ is real valued, then 
\begin{equation}
\label{eq:commutative2}
\A^k\Lap(g)x=\Lap(g)\A^kx \qquad \forall x \in D(\A^k),\ \forall k \in \{1,\ldots,m+2\}.
\end{equation}
\end{itemize}
\end{Lem}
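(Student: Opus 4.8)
The strategy is to establish part~(a) directly from the semigroup law and the two preceding lemmas, and then to deduce (b) by iteration, (c) by elementary algebra, and (d)--(e) from the closedness of $\A$. First I would dispose of the opening assertions: boundedness of $\Lap(g)$ follows from $\norma{\Lap(g)x}{}\le\int_0^\infty\norma{\T(t)}{}\,|g(t)|\,\norma{x}{}\de t$, so that $\norma{\Lap(g)}{}\le\int_0^\infty\norma{\T(t)}{}\,|g(t)|\de t<\infty$; right linearity of $\Lap(g)$ follows from the module identity $p(xq)=(px)q$, the right linearity of each $\T(t)$, and the fact that right multiplication by $q$ is a bounded $\erre$-linear operator on the real Banach space $(X,+,\norma{\cdot}{})$ (by \eqref{eq:homog}), hence commutes with the Bochner integral; and $\erre$-linearity of $g\longmapsto\Lap(g)$ is clear from linearity of the integral.

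For part~(a) I would fix $x\in X$ and $h>0$. Since $\T(h)$ is a bounded $\erre$-linear operator it passes inside the Bochner integral defining $\Lap(g)x$, and the semigroup identity $\T(h)\T(t)=\T(t+h)$ together with the change of variable $s=t+h$ gives
\[
\T(h)\Lap(g)x=\int_0^\infty\T(t+h)g(t)x\de t=\int_h^\infty\T(s)g(s-h)x\de s,
\]
the rightmost integral being well defined by part~(a) of the lemma preceding Lemma~\ref{lem:L}. Hence
\[
\frac1h\bigl(\T(h)\Lap(g)x-\Lap(g)x\bigr)=\int_h^\infty\T(s)\frac{g(s-h)-g(s)}{h}x\de s-\frac1h\int_0^h\T(s)g(s)x\de s .
\]
As $h\to0^+$ the first summand tends to $-\Lap(g')x$ by part~(b) of that same lemma (here $g,g'\in L_\T(\clsxint{0,\infty};\alg)$ is used), while the second tends to $\T(0)g(0)x=g(0)x$ since $s\longmapsto\T(s)g(s)x$ is continuous at $0$ by Lemma~\ref{T(.)g(.)x continuous}(b). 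Thus the limit defining the generator exists at $\Lap(g)x$, which proves \eqref{eq:LA} and \eqref{eq:AL}.

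Part~(b) is then a finite induction: applying \eqref{eq:AL} to $g^{(k)}$ for $k=0,1,\dots$ and using $g^{(k)}(0)=0$ for $k\le m$ together with $g^{(k)}\in L_\T(\clsxint{0,\infty};\alg)$ for $k\le m+2$, one obtains $\Lap(g^{(k)})(X)\subseteq D(\A)$ and $\A^k\Lap(g)x=(-1)^k\Lap(g^{(k)})x$ for $k=0,\dots,m+1$, i.e.\ \eqref{eq:LAm} and \eqref{ALk}, and then one more application of \eqref{eq:AL} to $g^{(m+1)}$ (whose value at $0$ is unconstrained) yields \eqref{eq:ALm}. Part~(c) is pure bookkeeping: substitute the formulas for $\A^2\Lap(g)x$ (from (b) with $m=0$ and $g'(0)=1$) and $\A\Lap(g)x$ (from (a) with $g(0)=0$) into $\Delta_q(\A)=\A^2-2\re(q)\A+|q|^2\Id$ and collect terms, using $\re(q),|q|^2\in\erre$ for $q\in Q_\alg$ and the $\erre$-linearity of $\Lap$, to reach \eqref{eq:DeltaL}.

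For part~(d), if $g$ is real-valued then $g(t)$ commutes with every scalar, so for $x\in D(\A)$ we have $g(t)x=xg(t)\in D(\A)$; since $\T(t)$ maps $D(\A)$ into itself and commutes with $\A$ there (the generator $\A$ being closed, as in the classical theory of strongly continuous semigroups), the function $t\longmapsto\A\bigl(\T(t)g(t)x\bigr)=\T(t)g(t)\A x$ is Bochner integrable, with $\norma{\T(t)g(t)\A x}{}\le\norma{\T(t)}{}\,|g(t)|\,\norma{\A x}{}$; closedness of $\A$ then allows it to pass outside the integral, giving $\A\Lap(g)x=\int_0^\infty\T(t)g(t)\A x\de t=\Lap(g)\A x$, i.e.\ \eqref{eq:commutative1}. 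Finally \eqref{eq:commutative2} follows by iterating (d): for $x\in D(\A^k)$, $1\le k\le m+2$, write $\A^k\Lap(g)x=\A^{k-1}\bigl(\Lap(g)\A x\bigr)$ and induct on $k$, using at each step that $\Lap(g)$ maps $X$ into $D(\A)$ by (a). I expect the only genuine difficulty to be part~(a)---specifically the identity $\T(h)\Lap(g)x=\int_h^\infty\T(s)g(s-h)x\de s$ and the passage to the limit as $h\to0^+$---where the scalar $g(t)$ stands on the side opposite to the one favoured by right linearity and one must rely on the two preceding lemmas and on the strong continuity and local boundedness of $\T$.
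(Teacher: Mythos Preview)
Your argument is correct and, for the opening assertions and parts~(a)--(c), it coincides with the paper's proof essentially line by line: the same difference-quotient computation for~(a), the same finite induction for~(b), and the same substitution for~(c).

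The one genuine divergence is in part~(d). You invoke the closedness of $\A$ together with the standard fact that a closed operator passes through a Bochner integral (Hille's theorem), using that $\T(t)$ preserves $D(\A)$ and commutes with $\A$ there. The paper instead argues directly at the level of difference quotients: since $g$ is real-valued, $g(t)$ commutes with $\T(h)$, so $\T(t)g(t)\T(h)x=\T(h)\T(t)g(t)x$, whence $\Lap(g)\,\frac{\T(h)-\Id}{h}x=\frac{\T(h)-\Id}{h}\,\Lap(g)x$; taking $h\to0$ and using \eqref{eq:LA} gives \eqref{eq:commutative1}. Your route is slightly shorter but imports an external (though standard) result and requires the facts $\T(t)D(\A)\subseteq D(\A)$ and $\A\T(t)=\T(t)\A$ on $D(\A)$; the paper's route is fully self-contained and mirrors the proof of~(a). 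For part~(e) both proofs induct, though the paper formally inducts on the parameter $m$ while you induct on the exponent $k$; the two are equivalent. One small refinement: in your inductive step for~(e) you need $\Lap(g)(X)\subseteq D(\A^{m+2})$ from~(b), not merely $\Lap(g)(X)\subseteq D(\A)$ from~(a), to ensure that $\A^{k-1}\bigl(\Lap(g)\A x\bigr)$ is defined for all $k\le m+2$.
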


\begin{proof}
For every $g \in L_\T(\clsxint{0,\infty};\alg)$ the right linearity of $\Lap(g)$ follows from the right linearity of $\T(t)$ and from the definition of the $X$-valued Lebesgue integral. For every $x \in X$ we have
\[
  \norma{\Lap(g)x}{} \le  \norma{x}{}\int_0^\infty \norma{\T(t)}{}|g(t)|\de t,
\]
hence $\Lap(g)$ is also continuous and $\norma{\Lap(g)}{} \le  \int_0^\infty \norma{\T(t)}{}|g(t)|\de t$. The real linearity of $\Lap$ is straightforward. Now in the following list of items we prove the assertions from (a) to (e).

\noindent
(a)
For every $h > 0$ and for every $x \in X$ we have
\begin{align*}
  \frac{\T(h)-\Id}{h}\,\Lap(g)x 
    & = \frac{1}{h}\int_0^\infty\T(t+h)g(t)x\de t-\frac{1}{h}\int_0^\infty\T(t)g(t)x\de t \\
    & = \frac{1}{h}\int_h^\infty\T(t)g(t-h)x\de t-\frac{1}{h}\int_0^\infty\T(t)g(t)x\de t \\
    & = -\frac{1}{h}\int_0^h\T(t)g(t)x\de t+\int_h^\infty\T(t)\left(\frac{g(t-h)-g(t)}{h}\right)x\de t.
\end{align*}
Hence, taking the limit as $h \to 0$, thanks to Lemma \ref{T(.)g(.)x continuous} we find that 
\[
  \A\Lap(g)x = -\T(0)g(0)x - \int_0^\infty \T(t)g'(t)x \de t = - g(0)x - \Lap(g')x.
\]

\noindent
(b)
We proceed by induction on $m\in\{-1\}\cup\enne$. The case $m=-1$ follows from (a). Let us assume that the result is true for $m-1$, and we prove it for $m$. Therefore if $g$ satisfies the assumptions, in particular we have
$\Lap(g)x \in D(\A^{m+1})$ and $\A^{m+1}\Lap(g)x = (-1)^{m+1}(g^{(m)}(0)x + \Lap(g^{(m+1)})x$ for every $x \in X$. But $g^{(m)}(0) = 0$ hence $\A^{m+1}\Lap(g)x = (-1)^{m+1}\Lap(g^{(m+1)})x$ and 
$\Lap(g^{(m+1)})x \in D(\A)$ by virtue of an application of \eqref{eq:LA} with $g$ replaced by $g^{(m+1)}$. Thus
$\A^{m+1}\Lap(g)x \in D(\A)$ and \eqref{eq:LAm} follows. Using again the validity of the statement for $m-1$ and the identity $g^{(m)}(0) = 0$ we have
\begin{align}
  \A^{m+2}\Lap(g)x 
   & = \A\A^{m+1}\Lap(g)x 
     = (-1)^{m+1}\A\Lap(g^{(m+1)})x  \notag \\
   & = (-1)^m(g^{(m+1)}(0)x +\Lap(g^{(m+2)}))x, \notag
\end{align}
where in the last equality we have used \eqref{eq:AL} with $g$ replaced by $g^{(m+1)}$. Therefore 
\eqref{eq:ALm} is proved. Formula \eqref{ALk} is trivial for $k=0$ and follows from (a) for $k=1$, while 
for $2 \le k \le m+1$ follows from \eqref{eq:ALm} which we have already proved.

\noindent
(c)
Now fix $x \in X$ and $q \in Q_\alg$. From (b) we obtain $\A^2\Lap(g)x =x+\Lap(g'')x$, 
hence, exploiting again \eqref{eq:AL} and the $\erre$-linearity of $\Lap$, we obtain
\begin{align*}
\Delta_q(\A)\Lap(g)x 
  & = \A^2\Lap(g)x - 2\re(q)\A\Lap(g)x + |q|^2\Lap(g)x \\
  &=x+\Lap(g'')x-2\re(q)(-\Lap(g')x)+|q|^2\Lap(g)x\\
  &=x+\Lap(g''+2\re(q)g'+|q|^2g)x.
\end{align*}

\noindent
(d) If $x \in D(\A)$, then for every $h > 0$ and for every $t > 0$ we have 
$\T(t)g(t)\T(h)x$ $=$ $\T(t)\T(h)g(t)x$ $=$ $\T(t+h)g(t)x$ $=$ $\T(h)\T(t)g(t)x$, because $g$ is real-valued and 
$\T$ is a semigroup. Therefore
\begin{align*}
   \Lap(g)\frac{\T(h)-\Id}{h} x 
    & = \frac{1}{h}\int_0^\infty\T(t)g(t)\T(h)x\de t - \frac{1}{h}\int_0^\infty\T(t)g(t)x\de t \notag \\
    & = \frac{1}{h}\int_0^\infty\T(h)\T(t)g(t)x\de t - \frac{1}{h}\int_0^\infty\T(t)g(t)x\de t \notag \\
    & = \frac{\T(h)-\Id}{h}\Lap(g)x,
\end{align*}
and the assertion follows taking the limit as $h \to 0$ and invoking \eqref{eq:LA}.

\noindent
(e) By induction on $m\in\{-1\}\cup\enne$. The case $m=-1$ is true by virtue of (d). Let us assume that the result is true for $m-1$, and we prove it for $m$. Therefore if $g$ satisfies the assumptions, in particular we have that
$\A^{k}\Lap(g) = \Lap(g)\A^{k}$ on $D(\A^{k})$ for all $k \in \{1,\ldots,m+1\}$. Hence if $x \in D(\A^{m+2})\subset D(\A^{m+1})$ then $\A^{m+1}x \in D(\A)$ and 
we have that
\[
  \Lap(g)\A^{m+2} x = \Lap(g)\A \A^{m+1}x =\A\Lap(g)\A^{m+1}x = \A\A^{m+1}\Lap(g)x = \A^{m+2}\Lap(g)x,
\]
where in the second equality we have used again (d).
\end{proof}

\begin{proof}[Proof of Theorem \ref{thm:supermain}]
Let us first recall that the existence of constant $\omega \in \erre$ such 
that $M := \sup_{t \ge 0}\norma{\T(t)}{}e^{-\omega t} < \infty $ is well know (cf. \cite[Thm 4.5(b)]{GhiRec16}). 
From \eqref{g_P} it follows that $g_P \in C^{m+2}(\clsxint{0,\infty};\alg)$ and the Leibniz formula for the 
higher derivatives of a product yields the existence of a polynomial $p(t, \lambda_1, \ldots, \lambda_h)$ such that
\[
  \norma{\T(t)}{}|g_P^{k}(t)| \le Me^{\omega t}|p(t, \lambda_1, \ldots, \lambda_h)| e^{-tr_P} 
  = M|p(t, \lambda_1, \ldots, \lambda_h)|e^{(\omega-r_P)t}
\]
for all $t\geq0$ and for all $k\in\{0,1,\ldots,m+2\}$. Thus $g_P, g_P', \ldots, g_P^{(m+2)} \in L_\T(\clsxint{0,\infty};\alg)$ and we can apply part (b) of Lemma \ref{lem:L} taking into account of initial conditions for $g_P$. We infer that 
\begin{align}
  P(\A)\Lap(g_P) x 
    & =  \sum_{k=0}^{m+2} \A^k a_k\Lap(g_P)x \notag \\
    & =  \sum_{k=0}^{m+1} (-1)^k\Lap(g_P^{(k)})a_k x +
         (-1)^{m+2}\left(g_P^{(m+1)}(0) + \Lap(g_P^{(m+2)})\right)a_{m+2} x\notag \\
     & =  \Lap\left(\sum_{k=0}^{m+2} g_P^{(k)}(-1)^ka_k\right)x + (-1)^{m+1}x = xg_P^{(m+1)}(0)(-1)^ma_{m+2}=x. \notag
\end{align}
On the other hand since $g_P$ is real-valued we can also apply parts (d) and (e) of Lemma \ref{lem:L} and infer that $\Lap(g_P)P(\A) = P(\A)\Lap(g_P)$ thus $\Lap(g_P) = P(\A)^{-1}$ and \eqref{inverso P = integrale} is proved. Now take $(p_0,p_1,\ldots,p_{m+1})\in\alg^{m+2}$. Using Lemma \ref{lem:L}(b) we deduce:
\begin{align}
  \sum_{j=0}^{m+1}\A^kP(\A)^{-1}p_jx 
   & = \sum_{j=0}^{m+1}\A^k\Lap(g_P)p_jx = \sum_{j=0}^{m+1} (-1)^k\Lap(g_P^{(k)})p_jx  = \Lap\left(\sum_{j=0}^{m+1} (-1)^kg_P^{(k)}p_j\right)x \notag\
\end{align}
and \eqref{potA P-1} is proved. It is well known that
\[
  Q_j(t) = \sum_{k=1}^{m_j}\frac{c_{-k}^{(j)}}{(k-1)!}t^{k-1}\,,
\] 
where $c_{-k}^{(j)}$ is the coefficient of $(z-\lambda_j)^{-k}$ within the partial frations decomposition of 
$a_{m+2}/P(-z)$, i.e. the residue at $\lambda_j$ of the function $a_{m+2}(z-\lambda_j)^{k-1}/P(-z)$. Therefore 
\[
  |g_P(t)| \le \sum_{j=1}^h\sum_{k=1}^{m_j}\frac{|c_{-k}^{(j)}|}{(k-1)!}t^{k-1}e^{-r_Pt} \qquad \forall t \ge 0
\]
and
\begin{align}
\norma{P(\A)^{-1}}{} 
  & \le \int_0^\infty \norma{\T(t)}{}\sum_{j=1}^h\sum_{k=1}^{m_j}\frac{|c_{-k}^{(j)}|}{(k-1)!}t^{k-1}e^{-r_Pt}\de t \notag \\
  & \le M\sum_{j=1}^h\sum_{k=1}^{m_j}\frac{|c_{-k}^{(j)}|}{(k-1)!} \int_0^\infty t^{k-1}e^{-(r_P-\omega)t} \de t
  \notag\\
  & = M\sum_{j=1}^h\sum_{k=1}^{m_j}\frac{|c_{-k}^{(j)}|}{(r_P-\omega)^k} \frac{1}{(k-1)!}\int_0^\infty t^{k-1}e^{-t} \de t
  \notag\\
  & = M\sum_{j=1}^h\sum_{k=1}^{m_j}\frac{|c_{-k}^{(j)}|}{(r_P-\omega)^k} \notag
\end{align}
and the theorem is completely proved.
\end{proof}

Now we address the case when $P(\x) = \x^2 -2\re(q) + |q|^2$ for some $q \in Q_\alg$ which is related to part (c)
of Lemma \ref{lem:L}. We first present a simple lemma whose proof is a trivial calculus exercise.

\begin{Lem} \label{lem:C}
For every fixed $q \in Q_\alg$, the unique solution of the Cauchy problem
\begin{equation}\label{Cauchy pb}
g''+2\re(q)g'+|q|^2g=0,  \quad  g(0)=0, \quad g'(0)=1.
\end{equation}
is the function $g_q: \erre \funzione \erre$ defined by 
\begin{equation}\label{g_q}
g_q(t) := te^{-\re(q)t}\sinc(t|\im(q)|), \qquad t \in \erre,
\end{equation} 
where we recall that $\sinc : \erre \funzione \erre$ is the \emph{unnormalized sinc function}, i.e. the only continuous real function $\xi$ on $\erre$ such that $\xi(r) = (\sin r)/r$ for all $r \neq 0$.
Moreover we have that $g_q, g_q', g_q'' \in L_\T(\clsxint{0,\infty};\alg)$.
\end{Lem}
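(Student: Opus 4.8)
The plan is to verify Lemma~\ref{lem:C} by a direct computation, splitting according to whether $\im(q)$ vanishes. First I would solve the constant-coefficient linear ODE \eqref{Cauchy pb}. Its characteristic polynomial is $X^2 + 2\re(q)X + |q|^2$; since $|q|^2 = \re(q)^2 + |\im(q)|^2$ for $q \in Q_\alg$ (which follows from $q = \re(q) + \im(q)$ with $\im(q)^2 = -|\im(q)|^2$ and $|q|^2 = qq^c$), the discriminant is $4\re(q)^2 - 4|q|^2 = -4|\im(q)|^2 \le 0$, so the roots are $-\re(q) \pm \ui|\im(q)|$. Setting $a := \re(q)$ and $b := |\im(q)|$, when $b \neq 0$ the general real solution is $e^{-at}(\alpha\cos(bt) + \beta\sin(bt))$; imposing $g(0)=0$ gives $\alpha = 0$, and $g'(0)=1$ gives $\beta b = 1$, hence $g(t) = e^{-at}\sin(bt)/b = te^{-at}\sinc(tb)$, which is exactly \eqref{g_q}. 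When $b = 0$ the root $-a$ is double, the general solution is $e^{-at}(\alpha + \beta t)$, and the initial conditions force $\alpha = 0$, $\beta = 1$, giving $g(t) = te^{-at}$; this again agrees with \eqref{g_q} since $\sinc(0) = 1$. Uniqueness is the standard Picard--Lindel\"of statement for linear ODEs. One should also note $g_q$ is well-defined on all of $\erre$ and is smooth, because $\sinc$ is real-analytic (it is $\xi \mapsto \sum_{n\ge 0}(-1)^n\xi^{2n}/(2n+1)!$).

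Next I would check the integrability claim $g_q, g_q', g_q'' \in L_\T(\clsxint{0,\infty};\alg)$. Differentiating \eqref{g_q} (or using $g_q(t) = e^{-at}\sin(bt)/b$ when $b\neq 0$, and handling $b=0$ separately) shows that $g_q(t)$, $g_q'(t)$ and $g_q''(t)$ are each of the form $e^{-at}$ times a bounded continuous function of $t$ (a trigonometric expression in $bt$, possibly multiplied by the linear factor $t$ in the case of $g_q$ itself or when $b = 0$). Concretely, $|g_q^{(k)}(t)| \le C_k(1+t)e^{-at}$ for $t \ge 0$ and $k \in \{0,1,2\}$, with $C_k$ depending only on $q$. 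By Lemma~\ref{T(.)g(.)x continuous}(b) the maps $t \mapsto \T(t)g_q^{(k)}(t)x$ are continuous; and $\norma{\T(t)}{}|g_q^{(k)}(t)| \le MC_k(1+t)e^{(\omega-a)t}$, which lies in $L^1(\clsxint{0,\infty};\erre)$ provided $a = \re(q) > \omega$. Strictly speaking, to place $g_q$ in $L_\T$ at all one needs this hypothesis, so I would state the integrability conclusion under the standing assumption $\re(q) > \omega$ already present in Theorem~\ref{thm:main-clifford} (equivalently, observe that Lemma~\ref{lem:C} is only invoked there). Since $g_q$ and its two derivatives are continuous on $\clsxint{0,\infty}$, membership in $L_\T(\clsxint{0,\infty};\alg)$ follows.

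There is no real obstacle here: the ODE part is an elementary exercise and the only subtlety is bookkeeping for the degenerate case $b = 0$ (double characteristic root) and making sure the stated smoothness of $g_q$ on all of $\erre$ is justified via the analyticity of $\sinc$. The mildest point requiring care is that the $L_\T$-membership is conditional on $\re(q) > \omega$; I would simply flag that this is the regime in which the lemma is used. Everything else is a direct substitution into \eqref{Cauchy pb} and an estimate of the form $|g_q^{(k)}(t)| \le C_k(1+t)e^{-\re(q)t}$.
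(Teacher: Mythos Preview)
Your proposal is correct and is precisely the intended approach: the paper itself omits the proof entirely, calling it ``a trivial calculus exercise,'' and your direct solution of the constant-coefficient ODE via the characteristic polynomial, together with the case split $|\im(q)|=0$ versus $|\im(q)|\neq 0$ and the elementary growth bound $|g_q^{(k)}(t)|\le C_k(1+t)e^{-\re(q)t}$, is exactly what that exercise amounts to. Your observation that the membership $g_q,g_q',g_q''\in L_\T(\clsxint{0,\infty};\alg)$ tacitly requires $\re(q)>\omega$ is also correct and worth noting; this hypothesis is in force whenever the lemma is invoked (in the proof of Theorem~\ref{thm:main}).
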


Now we are in position to find the integral representations of the spherical quasi-resolvent operator and of the spherical resolvent operators as a simple consequence of our main Theorem \ref{thm:supermain}.

\begin{proof}[Proof of Theorem \ref{thm:main}]
In order to prove \eqref{eq:Q_S} it is enough to apply part (a) of Theorem \ref{thm:supermain} with 
$P(\x) = \x^2 -2\re(q) + |q|^2$ taking into account of Lemma \ref{lem:C}. 
Formula \eqref{eq:AQ_S} is a straighforward application of part (b) of Theorem \ref{thm:supermain} with
$p_0=0$, $p_1=1$ and $p_2 = 0$. Instead \eqref{eq:C_S} is obtained taking  in part (b) of Theorem \ref{thm:supermain}  $p_0 = q^c$, $p_1 = -1$, and $p_2 = 0$.
Finally 
\begin{align}
  \norma{\Q_q(\A)}{} 
   \le \int_0^\infty \norma{\T(t)}{}|g_p(t)|\de t  \le  \int_0^\infty Mte^{(\omega - \re(q))t} \de t 
    = \frac{M}{(\re(q) - \omega)^2},\notag
\end{align}
i.e. \eqref{eq:estimateQ} holds. Finally estimate \eqref{eq:estimateC} can be obtained in a similar way.
\end{proof}

\begin{Rem}
By exploiting \eqref{g_q} of our Lemma \ref{lem:C} we can write the integral representation
\eqref{eq:Q_S} in a more explicit way for every $q \in Q_\alg$ such that $\re(q) > \omega$:
\begin{equation}
\Q_q(\A)x=-\int_0^\infty\T(t)\frac{e^{-\re(q)t}\sin\left(|\im(q)| t\right)}{|\im(q)|} x\de t  
\qquad \forall x \in X, \ q \not\in \erre,
\end{equation}
\begin{equation}
\Q_q(\A)x=-\int_0^\infty\T(t)te^{-qt} x\de t  
\qquad \forall x \in X, \ q \in \erre.
\end{equation}
\end{Rem}

The following lemma will connect the integral representation of $\Q_q(\A)$ to the so called \emph{spherical
derivative} of $q \longmapsto e^{-tq}$ (cf. \cite{GhiPer11}).

\begin{Lem}\label{L:exp^t}
For every $t \in \erre$ let $\exp^t : Q_\alg \funzione \alg$ be the function defined by 
\begin{equation}
  \exp^t(q) := \sum_{n=0}^\infty \frac{t^n}{n!}q^n = \sum_{n=0}^\infty \frac{(tq)^n}{n!}, \qquad q \in Q_\alg.
\end{equation}
If $(\exp^t)'_s : Q_\alg \setmeno \erre \funzione \alg$ denotes the function defined by
\begin{equation}
  (\exp^t)'_s(q) := (q-q^c)^{-1}(\exp^t(q) - \exp^t(q^c)), \qquad q \in Q_\alg \setmeno \erre,
\end{equation}
which is also called \emph{spherical derivative} of $\exp^t$, then
$(\exp^t)'_s$ extends to a unique conti\-nuous function on $Q_\alg$, which we still denote by $(\exp^t)'_s : Q_\alg \funzione \alg$, and we have
\begin{equation}\label{series for D_sf}
(\exp^t)'_s(q)= e^{t \re(q)} \sum_{n=0}^\infty \frac{t^{2n+1} \im(q)^{2n}}{(2n+1)!}
   \in \erre
  \qquad \forall q \in Q_\alg \setmeno \erre
\end{equation}
and $(\exp^t)'_s(q) = te^{t\re(q)}$ for every $q \in \erre$. In particular $(\exp^t)'_s$ is a real-valued. By abuse of notation, we write $\exp'_s(t,q)$ to indicate the element $(\exp^t)'_s(q)$ of $\alg$ for every $t \in \erre$ and for every $q \in Q_\alg$, respectively.
\end{Lem}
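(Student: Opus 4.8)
The plan is to compute $\exp^t(q)$ in closed form for $q\in Q_\alg\setmeno\erre$ by using that such a $q$ lives in a copy of $\ci$ sitting inside $\alg$, and then to evaluate the spherical difference quotient directly. Fix $t\in\erre$ and $q\in Q_\alg\setmeno\erre$, and write $q=r+s\j$ with $r,s\in\erre$, $s\neq0$, and $\j\in\su_\alg$; then $q^c=r-s\j$. Since $\j^2=-1$, the set $\ci_\j$ is a subalgebra of $\alg$ (necessarily closed, being finite dimensional), and the real-linear map $\phi\colon\ci\funzione\ci_\j$, $a+b\ui\longmapsto a+b\j$, is an isomorphism of unital $\erre$-algebras; being linear between finite dimensional spaces it is also continuous. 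As $|q^n|\le|q|^n$ by \eqref{eq:assumption-2bis}, the series defining $\exp^t$ converges absolutely, its partial sums lie in $\ci_\j$, and $\phi(z^n)=\phi(z)^n$ for every $z\in\ci$; hence, interchanging $\phi$ with the convergent series, $\exp^t(q)=\phi\big(e^{t(r+s\ui)}\big)=e^{tr}\big(\cos(ts)+\j\sin(ts)\big)$, and likewise $\exp^t(q^c)=e^{tr}\big(\cos(ts)-\j\sin(ts)\big)$.

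Next I would compute $(\exp^t)'_s(q)$ explicitly. One has $q-q^c=2s\j\in Q_\alg$ with $(q-q^c)(q-q^c)^c=4s^2$, so $(q-q^c)^{-1}=-\j/(2s)$, while $\exp^t(q)-\exp^t(q^c)=2e^{tr}\sin(ts)\,\j$. Multiplying and using $\j^2=-1$ gives
\[
(\exp^t)'_s(q)=\frac{-\j}{2s}\cdot 2e^{tr}\sin(ts)\,\j=e^{tr}\,\frac{\sin(ts)}{s}\in\erre .
\]
Since $|\j|^2=\j\j^c=1$ we have $|\im(q)|=|s|$, and since $\xi\mapsto\sin(t\xi)/\xi$ is even this equals $e^{t\re(q)}\,t\sinc(t|\im(q)|)$. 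On the other hand $\im(q)=s\j$ yields $\im(q)^2=-s^2=-|\im(q)|^2$, hence $\im(q)^{2n}=(-1)^n|\im(q)|^{2n}$, and summing the series on the right-hand side of \eqref{series for D_sf} gives $e^{t\re(q)}\,|\im(q)|^{-1}\sin(t|\im(q)|)=e^{t\re(q)}\,t\sinc(t|\im(q)|)$ as well. This proves \eqref{series for D_sf} and shows that $(\exp^t)'_s$ is $\erre$-valued on $Q_\alg\setmeno\erre$.

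It then remains to produce the continuous extension and prove its uniqueness. The function $h_t\colon Q_\alg\funzione\erre$, $h_t(q):=e^{t\re(q)}\,t\sinc(t|\im(q)|)$, is continuous on all of $Q_\alg$ because $q\mapsto\re(q)$ and $q\mapsto|\im(q)|$ are continuous and $\sinc$ and the real exponential are continuous; by the previous paragraph $h_t$ coincides with $(\exp^t)'_s$ on $Q_\alg\setmeno\erre$, and $h_t(q)=t\,e^{t\re(q)}=t\,e^{tq}$ for $q\in\erre$ since $\sinc(0)=1$. As $Q_\alg\setmeno\erre$ is dense in $Q_\alg$ (fixing $\j_0\in\su_\alg\neq\vuoto$, any $r\in\erre$ is the limit of $r+\tfrac1n\j_0\in\ci_{\j_0}\setmeno\erre$), the continuous extension is unique and equals $h_t$.

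The one point that deserves care is the identification $\exp^t(q)=e^{tr}(\cos(ts)+\j\sin(ts))$: one must check that $\ci_\j$ is a subalgebra (so that powers of $q$ stay inside it), that $\phi$ is multiplicative, and that its continuity legitimately permits passing $\phi$ through the infinite sum. The reality of the quotient can alternatively be obtained without $\phi$: from the identity $q^2=2\re(q)q-|q|^2$ one gets $q^n=\sigma_n q+\tau_n$ with $\sigma_n,\tau_n\in\erre$ depending only on $\re(q)$ and $|q|$ (hence unchanged under $q\mapsto q^c$), whence $\exp^t(q)-\exp^t(q^c)=\big(\sum_{n\ge0}\tfrac{t^n}{n!}\sigma_n\big)(q-q^c)$ and $(\exp^t)'_s(q)=\sum_{n\ge0}\tfrac{t^n}{n!}\sigma_n\in\erre$; the explicit value then follows either by solving the recursion $\sigma_{n+1}=2\re(q)\sigma_n+\tau_n$, $\tau_{n+1}=-|q|^2\sigma_n$, or, more cheaply, by returning to the complex picture above.
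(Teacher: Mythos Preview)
Your proof is correct and follows essentially the same route as the paper: write $q=r+s\j$, exploit the isomorphism $\ci_\j\cong\ci$ to evaluate $\exp^t(q)$ in closed form, compute the spherical difference quotient to obtain $e^{t\re(q)}\sin(t|\im(q)|)/|\im(q)|$, expand $\sin$ as a power series, and extend by continuity to $\erre$. You supply more detail than the paper on the continuity and uniqueness of the extension (via density of $Q_\alg\setmeno\erre$ in $Q_\alg$), and your closing alternative via the recursion $q^n=\sigma_n q+\tau_n$ is a nice addition, but the core argument is the same.
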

\begin{proof}
For every $q \in Q_\alg \setmeno \erre$ there exists $\j \in \su_\alg$ and $a, b \in \erre$ such that $b > 0$ and
$q = a + b\j$. Hence $q^c = a^c - \j^cb^c = a - b\j$, $\re(q) = a$, $\im(q) = b\j$, and $|\im(q)| = \sqrt{(b\j)(b\j)^c} = b$. Since $\ci_\j$ and $\ci$ are isomorphic real algebras, we find that 
$\exp^t(q) = e^{tq} = e^{ta}(\cos(tb) + \sin(tb)\j)$ and 
$\exp^t(q^c) = e^{tq^c} = e^{ta}(\cos(tb) - \sin(tb)\j) = (e^{tq})^c$, therefore
\begin{equation}\label{D_sf exp using sin}
(\exp^t)'_s(q) = (q-q^c)^{-1}(e^{tq} - e^{tq^c}) = e^{t\re(q)}\sin(t|\im(q)|)|\im(q)|^{-1} 
  \qquad \forall q \in Q_\alg \setmeno \erre,
\end{equation}
which proves the first equality in \eqref{series for D_sf}. The right-hand side of \eqref{D_sf exp using sin} 
immediately extends by continuity to $te^{ta}$ for $q \in \erre$, thus $(\exp^t)'_s$ is a real-valued function. As $|\im(q)|^2=-\im(q)^2$, by
\eqref{D_sf exp using sin} we have 
\[
(\exp^t)'_s(q) = 
  e^{t\re(q)} \sum_{n=0}^\infty \frac{(-1)^n t^{2n+1}|\im(q)|^{2n}}{(2n+1)!} =e^{t\re(q)} \sum_{n=0}^\infty \frac{t^{2n+1} \im(q)^{2n}}{(2n+1)!}
  \qquad \forall q \in Q_\alg \setmeno \erre,
\]
which implies the second equality of \eqref{series for D_sf}.
\end{proof}

\begin{Cor}
Under the assumption of Theorem \ref{thm:main}, for every $q \in Q_\alg$ such that $\re(q) > \omega$ we have
\begin{equation}
  \Q_q(\A)x = -\int_0^\infty \T(t)\exp'_s(-t,q)\, x \de t \qquad \forall x \in X.
\end{equation} 
\end{Cor}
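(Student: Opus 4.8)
The plan is to read the statement directly off formula~\eqref{eq:Q_S} of Theorem~\ref{thm:main} once the integrand is rewritten. Since $\re(q)>\omega$, Theorem~\ref{thm:main} already gives $q\in\rho_\s(\A)$ together with the representation $\Q_q(\A)x=\int_0^\infty\T(t)g_q(t)x\de t$ for all $x\in X$, where $g_q(t)=te^{-\re(q)t}\sinc(t|\im(q)|)$. So the entire content of the corollary is the elementary pointwise identity $g_q(t)=-\exp'_s(-t,q)$, valid for every $t\in\erre$ and every $q\in Q_\alg$.

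To verify this identity I would split into the two regimes of Lemma~\ref{L:exp^t}. If $q\notin\erre$, put $b:=|\im(q)|>0$; then by the definition of the unnormalized sinc function one has $g_q(t)=e^{-\re(q)t}\sin(tb)/b$ for $t\neq0$ and $g_q(0)=0$, while \eqref{D_sf exp using sin} gives $\exp'_s(-t,q)=e^{-\re(q)t}\sin(-tb)/b=-e^{-\re(q)t}\sin(tb)/b$, with $\exp'_s(0,q)=0$ by the series~\eqref{series for D_sf}; hence $g_q(t)=-\exp'_s(-t,q)$ throughout (equivalently: both sides are continuous on $\erre$ and agree off $0$). If $q\in\erre$, then $|\im(q)|=0$ and $\sinc(0)=1$, so $g_q(t)=te^{-\re(q)t}$, whereas Lemma~\ref{L:exp^t} yields $\exp'_s(-t,q)=(-t)e^{-\re(q)t}=-te^{-\re(q)t}$; again $g_q(t)=-\exp'_s(-t,q)$.

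Plugging this into~\eqref{eq:Q_S} gives, for every $x\in X$,
\[
\Q_q(\A)x=\int_0^\infty\T(t)g_q(t)x\de t=-\int_0^\infty\T(t)\exp'_s(-t,q)\,x\de t,
\]
which is the assertion; the right-hand integral converges as an $X$-valued Lebesgue integral because $t\mapsto\T(t)g_q(t)x$ is already known to be integrable by Lemma~\ref{lem:C} (together with Lemma~\ref{T(.)g(.)x continuous}). There is no serious obstacle here: the corollary is essentially a cosmetic reformulation of Theorem~\ref{thm:main} in the language of the spherical derivative of $q\mapsto\exp^t(q)$, and the only point requiring a moment's care is checking that $g_q=-\exp'_s(-\,\cdot\,,q)$ uniformly in $t$ (including at $t=0$) and across both cases $q\in\erre$ and $q\notin\erre$, which is immediate from the explicit formulas recorded in Lemma~\ref{L:exp^t}.
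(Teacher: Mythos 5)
Your proof is correct and is precisely the argument the paper leaves implicit (the corollary is stated without proof as an immediate consequence): you combine the representation \eqref{eq:Q_S} of Theorem \ref{thm:main} with the formulas of Lemma \ref{L:exp^t} to establish the pointwise identity $g_q(t)=-\exp'_s(-t,q)$ in both cases $q\notin\erre$ and $q\in\erre$, including $t=0$, and then substitute. As a minor aside not affecting your argument, note that the paper's immediately preceding Remark displays $\Q_q(\A)$ with spurious leading minus signs inconsistent with \eqref{eq:Q_S}; your computation correctly starts from \eqref{eq:Q_S}, where the sign is right.
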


%%%

\section{Integral representation of the powers of $\Q_q(\A)$}\label{Qq^n}

In this section we look for an integral representation of the integer powers of the 
spheri\-cal quasi-resolvent operator $\Q_q(\A)$. In order to find this representation we need the following lemma.

\begin{Lem}\label{convolution}
If $f,g \in L_\T(\clsxint{0,\infty};\alg)$ and $f$ is real-valued, then
\[
\Lap(f)\Lap(g)x = \Lap(f \convstar g)x = \Lap(g \convstar f)x \qquad \forall x \in X,
\]
where we recall that  $f\convstar g : \clsxint{0,\infty} \funzione \alg$, the convolution of $f$ and $g$, is defined by
$(f \convstar g)(t):=\int_0^t f(t-s)g(s)\de s$. 
\end{Lem}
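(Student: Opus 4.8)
The plan is to reduce the operator identity to a Fubini-type interchange of integrals and a change of variables, using only the definition of $\Lap$ in \eqref{def Lap}, the right linearity of each $\T(t)$, the semigroup property, and the fact that $f$ is real-valued. First I would fix $x \in X$ and write, using $\Lap(g)x \in X$ and the definition of $\Lap$ twice,
\begin{equation*}
  \Lap(f)\Lap(g)x = \int_0^\infty \T(\sigma)\, f(\sigma)\, \Lap(g)x \,\de\sigma
    = \int_0^\infty \T(\sigma) f(\sigma)\left(\int_0^\infty \T(\tau) g(\tau) x \,\de\tau\right)\de\sigma.
\end{equation*}
Since $f(\sigma) \in \erre$ commutes with $\T(\tau)$ (because the real scalars act centrally and $\T(\tau)$ is right linear over $\alg$), and since $\T(\sigma)\T(\tau) = \T(\sigma+\tau)$, the inner expression becomes $\int_0^\infty \T(\sigma+\tau)\, g(\tau) x \, f(\sigma)\,\de\tau$. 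Pulling $f(\sigma)$ through $\T(\sigma)$ and the outer integral is legitimate here since $f$ takes real values.

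Next I would justify interchanging the two integrals. The map $(\sigma,\tau) \longmapsto \T(\sigma+\tau) g(\tau) x\, f(\sigma)$ is continuous (by Lemma \ref{T(.)g(.)x continuous}(b) and the continuity of $\T(\sigma+\tau)$ in both variables, which follows from strong continuity and the semigroup law) and its norm is bounded by $\norma{\T(\sigma)}{}\,|f(\sigma)|\,\norma{\T(\tau)}{}\,|g(\tau)|\,\norma{x}{}$, a product of functions lying in $L^1(\clsxint{0,\infty};\erre)$ by the hypothesis $f,g \in L_\T(\clsxint{0,\infty};\alg)$. Hence the double integral over $\clsxint{0,\infty}^2$ is absolutely convergent in the real Banach space $(X,+)$, and Fubini's theorem applies. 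Substituting $t := \sigma + \tau$ for the outer variable (with $\tau$ fixed, so $\sigma = t - \tau$ ranges over $\clsxint{\tau,\infty}$) and then swapping the order of integration so that $t$ is outermost gives
\begin{equation*}
  \Lap(f)\Lap(g)x = \int_0^\infty \T(t)\left(\int_0^t f(t-\tau)\, g(\tau)\, x \,\de\tau\right)\de t
  = \int_0^\infty \T(t)\,(f\convstar g)(t)\,x\,\de t = \Lap(f\convstar g)x,
\end{equation*}
where pulling $\T(t)$ out of the inner $\tau$-integral uses right linearity of $\T(t)$ (the $\alg$-valued integrand is handled via the real-Banach-space structure on $X$). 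Finally, to get $\Lap(f\convstar g)x = \Lap(g\convstar f)x$, one notes that since $f$ is real-valued the substitution $s \mapsto t-s$ in $(f\convstar g)(t) = \int_0^t f(t-s)g(s)\,\de s$ yields $\int_0^t f(s)g(t-s)\,\de s = (g\convstar f)(t)$ pointwise, so the two integrands agree.

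The main obstacle is the bookkeeping in the Fubini step: one must check that the $X$-valued double integral is genuinely Lebesgue integrable on the product space, which I would package by invoking the scalar domination $\norma{\T(\sigma)}{}|f(\sigma)|\cdot\norma{\T(\tau)}{}|g(\tau)|\cdot\norma{x}{}$ together with Tonelli's theorem for the product of two $L^1$ functions, and then appeal to the Fubini theorem for Bochner integrals (valid since $(X,+)$ with $\norma{\cdot}{}$ is a genuine real Banach space, as already observed after \eqref{def Lap}). A secondary point requiring a line of care is that all the manipulations moving real scalars $f(\sigma)$, $f(t-\tau)$ past the operators $\T(\cdot)$ and outside the integrals are valid precisely because $f$ is real-valued; if $f$ were $\alg$-valued this would fail, which is exactly why the hypothesis is needed and why only one of the two convolution orders is ``privileged'' before one uses the change of variables.
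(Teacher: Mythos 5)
Your proof is correct and follows essentially the same route as the paper's: write $\Lap(f)\Lap(g)x$ as an iterated integral, use that $f$ is real-valued to commute it past $\T$ and $g$, apply the semigroup law, change variables, apply Fubini, and recognize the inner integral as the convolution; the identity $f\convstar g=g\convstar f$ is handled by the same substitution. The only difference is cosmetic — you spell out the Tonelli/Fubini domination argument that the paper leaves implicit and you verify $f\convstar g=g\convstar f$ at the end rather than at the start.
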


\begin{proof}
Using the fact that $f$ is real-valued, we see at once that $f\convstar g=g\convstar f$. In addition, bearing in mind the semigroup law for $\T$, we find 
\begin{align*}
 \Lap(f)\Lap(g)x 
   & = \int_0^\infty \T(t)f(t)\int_0^\infty\T(s) g(s) x \, \de s \de t  \\
   & = \int_0^\infty \int_0^\infty \T(t)\T(s) f(t)g(s) x \, \de s \de t  \\
   & = \int_0^\infty \int_0^\infty \T(t+s) f(t)g(s) x \, \de s \de t\,.
\end{align*}   
Thus a change of variable and an application of Fubini theorem yields
\begin{align*}
   \Lap(f)\Lap(g)x 
   & = \int_0^\infty \int_t^\infty \T(s) f(t)g(s - t) x \, \de s \de t  \\
   & = \int_0^\infty \int_0^\infty \chi_{\clint{0,s}}(t)\T(s) f(t)g(s - t) x \, \de s \de t  \\
   & = \int_0^\infty \int_0^\infty \chi_{\clint{0,s}}(t)\T(s) f(t)g(s - t) x \,  \de t \de s \\
   & = \int_0^\infty \int_0^t \T(s) f(t)g(s - t) x \,  \de t \de s \\
   & = \int_0^\infty \T(s) \int_0^t  f(t)g(s - t) x \,  \de t \de s \\
   & = \Lap(f\convstar g) = \Lap(g\convstar f).\notag
\end{align*}
The proof is complete.
\end{proof}

Given $n\in\enne\setmeno\{0\}$ and $f \in L_\T(\clsxint{0,\infty};\alg)$, we define $f^{\convstar n}\in L_\T(\clsxint{0,\infty};\alg)$ by
\[
f^{\convstar n} := 
\underbrace{f\convstar f\convstar\,\cdots\,\convstar f}_{\text{$n$ times}}.
\]

\begin{Cor}
Let $\T:\clsxint{0,\infty} \funzione \Lin^r(X)$ be a strongly continuous right linear semigroup, let 
$\A:D(\A) \funzione X$ be its generator, and let $\omega\in\erre$ be a real constant such that $M:=\sup_{t\in\clsxint{0,\infty}}\norma{\T(t)}{}e^{-\omega t}<\infty$.
Given any $q \in Q_\alg$ with $\re(q)>\omega$, we have that $q \in \rho_\s(\A)$ and
\begin{equation}\label{Q^n =}
  \Q_q(\A)^n x = (-1)^n \int_0^\infty \T(t)\exp'_s(-t,q)^{\convstar n}x \de t
\end{equation}
where $\exp'_s(-t,q)^{\convstar n}\in\alg$ indicates the value of $((\exp^{-t})'_s)^{\convstar n}$ at $q$.

Moreover for every $q \in Q_\alg$ with $\re(q)>\omega$ we have
\begin{equation}\label{|Q|^n}
  \norma{\Q_q(\A)^n}{} \le \frac{M}{(\re(q) - \omega)^{2n}} \qquad \forall n \in \enne \setmeno \{0\}.
\end{equation}
\end{Cor}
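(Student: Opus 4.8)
The plan is to derive the corollary directly from the machinery already built. First I would establish the iterated convolution identity: since $\exp'_s(-t,q)$ is real-valued for every $q\in Q_\alg$ (by Lemma \ref{L:exp^t}), repeated application of Lemma \ref{convolution} gives
\[
\Lap(f)^n x=\Lap(f^{\convstar n})x\qquad\forall x\in X
\]
whenever $f,f^{\convstar n}\in L_\T(\clsxint{0,\infty};\alg)$ and $f$ is real-valued. One needs to check that $f^{\convstar n}$ lies in $L_\T(\clsxint{0,\infty};\alg)$; this is routine, since the convolution of two continuous functions is continuous, and the exponential-type majorization $|f(t)|\le C\,t\,e^{-\re(q)t}$ (valid here with $f=(\exp^{-t})'_s$ evaluated at $q$, from the explicit formula \eqref{D_sf exp using sin}) is stable under convolution up to enlarging the polynomial factor, so $t\mapsto\norma{\T(t)}{}|f^{\convstar n}(t)|$ is still dominated by $M\,p(t)\,e^{(\omega-\re(q))t}$ for a polynomial $p$, hence integrable because $\re(q)>\omega$.

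Next I would combine this with the representation of $\Q_q(\A)$ already proved. By Theorem \ref{thm:main} (specifically the Corollary following Lemma \ref{L:exp^t}), for $\re(q)>\omega$ we have $q\in\rho_\s(\A)$ and $\Q_q(\A)x=-\int_0^\infty\T(t)\exp'_s(-t,q)\,x\de t=\Lap(f)x$ with $f(t):=\exp'_s(-t,q)$, up to the sign: precisely $\Q_q(\A)=-\Lap(f)$. Therefore $\Q_q(\A)^n=(-1)^n\Lap(f)^n=(-1)^n\Lap(f^{\convstar n})$ by the identity from the first step, which is exactly \eqref{Q^n =}.

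For the norm bound \eqref{|Q|^n}, I would argue by induction on $n$ using submultiplicativity of the operator norm together with the estimate \eqref{eq:estimateQ} from Theorem \ref{thm:main}, namely $\norma{\Q_q(\A)}{}\le M/(\re(q)-\omega)^2$. This alone would give $\norma{\Q_q(\A)^n}{}\le M^n/(\re(q)-\omega)^{2n}$, which is weaker than claimed. To get the sharper bound with a single factor $M$ I would instead estimate directly from \eqref{Q^n =}: one has $\norma{\Q_q(\A)^n}{}\le\int_0^\infty\norma{\T(t)}{}|f^{\convstar n}(t)|\de t\le M\int_0^\infty e^{\omega t}|f^{\convstar n}(t)|\de t$. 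Since $|f(t)|\le t\,e^{-\re(q)t}=g_{\re(q)}(t)$ pointwise (here $g_r(t)=te^{-rt}$ is the real exponential case of $g_q$), and convolution is monotone with respect to pointwise nonnegative bounds, $|f^{\convstar n}(t)|\le g_{\re(q)}^{\convstar n}(t)$. Now $g_r^{\convstar n}$ is explicitly computable — its Laplace transform is $(s+r)^{-2n}$, so $g_r^{\convstar n}(t)=t^{2n-1}e^{-rt}/(2n-1)!$ — whence $\int_0^\infty e^{\omega t}g_{\re(q)}^{\convstar n}(t)\de t=\frac{1}{(2n-1)!}\int_0^\infty t^{2n-1}e^{-(\re(q)-\omega)t}\de t=(\re(q)-\omega)^{-2n}$. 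This yields \eqref{|Q|^n} with the single factor $M$.

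The main obstacle is the bookkeeping around the convolution estimates: one must be careful that Lemma \ref{convolution} applies at each stage (which requires the real-valuedness of $f$, already guaranteed, and membership in $L_\T$, which the exponential majorization secures), and that the pointwise domination $|f^{\convstar n}|\le g_{\re(q)}^{\convstar n}$ is legitimate — this uses $|f|\le g_{\re(q)}$ together with the elementary fact that if $0\le|a|\le A$ and $0\le|b|\le B$ then $|a\convstar b|\le A\convstar B$, which in turn relies on all quantities being real and nonnegative. Once these are in place, the identification of $g_r^{\convstar n}$ via its Laplace transform is the only computation, and it is standard.
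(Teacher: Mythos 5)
Your proposal is correct and follows essentially the same route as the paper: iterate Lemma~\ref{convolution} to pass from $\Q_q(\A)^n=(-1)^n\Lap(f)^n$ to $(-1)^n\Lap(f^{\convstar n})$, then estimate via the pointwise domination $|f(t)|\le te^{-\re(q)t}$ and the gamma integral. The only cosmetic difference is that you identify $g_r^{\convstar n}(t)=t^{2n-1}e^{-rt}/(2n-1)!$ through the Laplace transform, whereas the paper obtains the bound $|f^{\convstar n}(t)|\le t^{2n-1}e^{-\re(q)t}/(2n-1)!$ by a direct induction on the convolution integral; both yield the same estimate.
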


\begin{proof}
Formula \eqref{Q^n =} follows immediately from $n$ applications of Theorem \ref{thm:main} and Lemma \ref{convolution}. In order to prove \eqref{|Q|^n} let us observe that, given $q\in Q_\alg$, if $a = \re(q)$ and $b = |\im(q)|$, and 
$g(t) = -\exp'_s(-t,q)$ for $t\geq0$, then
\begin{align}
  |(g\convstar g)(t)| 
    & \le \int_0^t |g(t-s)||g(s)| \de s \notag \\
    &  \le    \int_0^t (t-s)e^{-(t-s)a}se^{-sa} \de s \notag \\
    & =  e^{-ta}\int_0^t s(t-s) \de s  
    = \frac{1}{2\cdot 3}t^3e^{-ta}. \notag 
\end{align}
Le us assume by induction that $|g^{\convstar(n-1)}| \le ((2n-3)!)^{-1}t^{2n-3}e^{-at}$. Therefore
for every $n$
\begin{align}
  |g^{\convstar n}(t)|
   & \le \int_0^t |g(t-s)| |g^{\convstar(n-1)}(s)| \de s \notag \\
   & \le \frac{1}{(2n-3)!}\int_0^t (t-s)e^{-(t-s)a} s^{2n-3}e^{-as} \de s \notag \\
   & = \frac{e^{-ta}}{(2n-3)!}\int_0^t (t-s) s^{2n-3} \de s \notag \\
   & = \frac{e^{-ta}}{(2n-3)!}\frac{t^{2n-1}}{(2n-2)(2n-1)} 
     = \frac{t^{2n-1}e^{-ta}}{(2n-1)!}. \notag
\end{align}
Thus
$|g^{\convstar n}(t)| \le \frac{1}{(2n-1)!}t^{2n-1}e^{-ta}$ for every $t \ge 0$ and every 
$n \in \enne \setmeno \{0\}$ and, recalling that $\int_0^\infty t^{2n-1}e^{-t}  \de t = (2n-1)!$, we have
\begin{align}
  \norma{\Q_q(\A)^n}{}
    & \le\int_0^\infty \norma{\T(t)}{} |g^{\convstar n}(t)| \de t  \notag \\
    & \le\int_0^\infty  Me^{\omega t} \frac{1}{(2n-1)!}t^{2n-1}e^{-ta} \de t  \notag \\
    & = \frac{M}{(2n-1)!}\int_0^\infty  t^{2n-1}e^{-(a - \omega)t}  \de t  \notag \\
    & = \frac{M}{(2n-1)!}\int_0^\infty \frac{t^{2n-1}}{(a-\omega)^{2n}}e^{-t}  \de t  \notag \\
        & = \frac{M}{(a - \omega)^{2n}},\notag
\end{align}
and we are done.
\end{proof}

%%%%%%%%%%%%%%%%%%%%
%%%%%%%%%% REFERENCES 
%%%%%%%%%%%%%%%%%%%%

%%%

\end{document}